\documentclass[11pt, a4paper]{amsart}

\usepackage{amsmath, amsthm, amssymb,fullpage}

\usepackage{color}
\usepackage{xcolor}

\newcommand{\bD}{\mathbb{D}}
\newcommand{\bC}{\mathbb{C}}
\newcommand{\bN}{\mathbb{N}}
\newcommand{\bP}{\mathbb{P}}
\newcommand{\bR}{\mathbb{R}}

\theoremstyle{plain}
\newtheorem{theorem}{Theorem}[section]

\newtheorem{lemma}[theorem]{Lemma}
\newtheorem{proposition}[theorem]{Proposition}
\newtheorem*{claim}{Claim}  
\newtheorem{mainth}{Theorem}
\newtheorem*{maincoro}{Corollary}
\theoremstyle{definition}

\newtheorem*{acknowledgement}{Acknowledgement}
\theoremstyle{remark}

\numberwithin{equation}{section}

\renewcommand{\P}{\mathbb{P}}
\newcommand{\C}{\mathbb{C}}

\newcommand{\cC}{\mathcal{C}}

\begin{document} 
\title[Degeneration to a H\'enon map]{Degeneration of quadratic polynomial endomorphisms to a H\'enon map}

\begin{author}[F.~Bianchi]{Fabrizio Bianchi}
\address{ 
 Imperial College,
 South Kensington Campus,
  London SW7 2AZ,
  UK}
  \email{f.bianchi$@$imperial.ac.uk}
\end{author}

\begin{author}[Y.~Okuyama]{Y\^usuke Okuyama}
\address{Division of Mathematics, Kyoto Institute of Technology, Sakyo-ku, Kyoto 606-8585 JAPAN}
\email{okuyama$@$kit.ac.jp}
\end{author}

\subjclass[2010]{Primary 32H50; Secondary 37F45, 32U40, 37H15}
\keywords{H\'enon map, quadratic regular polynomial endomorphism, degeneration,
bifurcation, potential theory, ergodic theory}

\begin{abstract}
{For an algebraic family $(f_t)$ 
of regular quadratic polynomial endomorphisms of $\bC^2$
parametrized by $\bD^*$ and degenerating to a H\'enon map at $t=0$, 
we study
the continuous {(and indeed harmonic)} extendibility
across $t=0$ of a potential of the bifurcation current on $\bD^*$ 
with the explicit 
{computation of the}
non-archimedean Lyapunov exponent
associated to $(f_t)$. The
individual Lyapunov exponents of $f_t$ are also
investigated near $t=0$.
Using $(f_t)$, 
we also see that any H\'enon map is accumulated by the bifurcation locus 
in the space of quadratic holomorphic endomorphisms of $\bP^2$. }
\end{abstract}

\maketitle

\section{Introduction}

Our aim is to study an algebraic family
of regular quadratic polynomial endomorphisms 
of $\bC^2$ parametrized by a punctured open disk
and degenerating to a H\'enon map 
at the puncture,
paying a particular
attention to the asymptotic behaviour of 
the 
individual Lyapunov exponents and their sums.
For one dimensional meromorphic families of rational functions, such degenerations 
towards rational functions of lower (topological) degrees
have been studied in \cite{DeMarco-stable,fg_continuity,dmo_discontinuity}.
Favre recently introduced a general framework to study such
degenerations for holomorphic endomorphisms of $\P^k$ 
in \cite{favre_degeneration} and
we here provide the first concrete study
in dimension $k$ higher than 1.
We also {study} the geometry of the \emph{bifurcation locus},
in the sense of \cite{bbd}, 
in {the space of quadratic holomorphic endomorphisms of $\P^2$}.
The geometry of the bifurcation locus 
near the line
at infinity of the moduli space $\mathcal{M}_2\cong\bC^2$ 
of the quadratic rational functions on $\bP^1$ and that near
the hyperplane at infinity
of
the natural parameter space {$\cong\bC^3$}
of quadratic polynomial skew products on $\bC^2$
have been studied in \cite{bg_geometry} and \cite{ab_skew}, respectively.

Let us be more specific and precisely state our results. In the rest of this article,
we fix 
\begin{gather*}
 c\in\bC^*\quad\text{and}\quad p(w)=w^2+c_1w+c_2\in\bC[w].
\end{gather*}
For each 
\begin{gather}\label{eq_gh}
 (g,h)\in\bC[z,w]\times\bC[z,w]\quad\text{such that } 
\deg g=2,\ g_{zz}\in\bC^*, \text{ and }\deg h\le 2,
\end{gather}
we focus on the  {algebraic} family 
\begin{equation}
 f_t(z,w)=f_t(z,w;g,h)=\begin{pmatrix}
	   w\\
	   cz+p(w)
	  \end{pmatrix}+t\begin{pmatrix}
			  g(z,w)\\
			  h(z,w)
			 \end{pmatrix},\quad t\in\bD,\label{eq_ft}
\end{equation}
of 
quadratic polynomial endomorphisms of $\C^2$ parametrized by $\bD$;
we also set the constants ${g_{zz}}/2!=: G_g = G\in\bC^*$ 
and ${h_{zz}}/2!=: H_h = H\in\bC$, respectively, and set
\begin{gather*}
 \tilde{g}(z,w):=g(z,w)-Gz^2. 
\end{gather*}

For $t=0$, the map $f_0(z,w)=(w,cz+p(w))$ is a H\'enon map 
 (see e.g.\ \cite{FM89}) and is independent of $(g,h)$. 
For every $0<|t|\ll 1$,
$f_t$ satisfies the condition 
\begin{gather}
 \liminf_{\|(z,w)\|\to\infty}\frac{\|f_t(z,w)\|}{\|(z,w)\|^2}>0\label{eq:regular} 
\end{gather}
(here $\|\cdot\|$ is the Euclidean norm on $\bC^2$),
or equivalently, $f_t$ is a {\itshape regular} quadratic polynomial 
endomorphism of $\bC^2$, that is, it extends to a holomorphic endomorphism of $\P^2$
{(see e.g.\ \cite{BedfordJonsson00})}.
In particular, if $0<|t|\ll 1$,
 then
$f_t$ admits the {(second)} Julia set $J_{f_t}$, 
which is contained in $\bC^2$ and
coincides with the support of the unique maximal entropy measure 
$\mu_{f_t}$ of (the holomorphic extension to $\bP^2$ of) $f_t$,
and
the sum $L(f_t)$ of the two {\itshape individual} Lyapunov exponents 
{$\chi_1(f_t)\ge\chi_2(f_t)$(indeed $\ge\log\sqrt{2}$ \cite{briendduval})} 
of 
$f_t$ with respect to $\mu_{f_t}$ is given by 
\begin{gather}
L(f_t)= \int_{\bC^2}\log|\det Df_t|\mu_{f_t}\in\bR.
\label{eq_lyapunov}
\end{gather}
We also call $L(f_t)$ {\itshape the Lyapunov exponent} of $f_t$ with respect to $\mu_{f_t}$.
Here and below, we fix the trivialization of
the tangent bundle $T\bC^2$ of $\bC^2$
induced by the orthonormal frame $(\partial_z,\partial_w)$
of $T\bC^2$, and identify the derivative $df_t$ of $f_t$ 
with the $M(2,\bC)$-valued function $(z,w)\mapsto\det(Df_t)_{(z,w)}$, by convention.

We regard the set of all $(g,h)$ as in \eqref{eq_gh} 
as $(\bC^*\times\bC^2)\times\bC^3$, parametrizing it 
by the coefficients of $g,h$
to mention the local uniformity of the estimates. 

\subsection{Degeneration of the Lyapunov exponent}
Our first interest is in
the asymptotic behaviour of $L(f_t)$ as $t\to{0}$,
where $f_t=f_t(z,w;g,h)$, for each $(g,h)$ as in \eqref{eq_gh}.
Such a behaviour has been studied for meromorphic families of rational functions on $\bP^1$ by DeMarco \cite{DeMarco-stable}.
In our situation, it follows from Favre's generalization
\cite{favre_degeneration} of DeMarco's estimate
that there 
is a {non-negative} constant $\alpha$ such that
\[
L(f_t) = \alpha \log |t|^{-1} + o\bigl(\log|t|^{-1}\bigr)\quad\text{as }t\to 0 
\]
and that the constant $\alpha$ is characterized 
as the \emph{non-archimedean} Lyapunov exponent 
associated to the meromorphic family $(f_t)_{t\in\bD^*}$,
regarded as a single rational function 
defined over a field of formal Laurent series at $t=0$. 
{The 
function $t\mapsto L(f_t)-\alpha\log|t|^{-1}$ is
continuous and subharmonic on $0<|t|\ll 1$ 
(see e.g.\ \cite{ds_cime} for more details), and
is a potential of the {\itshape bifurcation current} {(indeed {\itshape measure})}
on $0<|t|\ll 1$
associated to the family $(f_t)$ in the sense of \cite{bbd}.}

In the following, it is convenient to say that
the pair $(g,h)$ 
{(or the associated $(G,H)=(G_g,H_h)$)}
is {\itshape non-exceptional} if
\begin{gather}
 \biggl|\frac{H}{G}\biggr|\neq|c|.
\end{gather}
Our first principal result
answers affirmatively
Favre's general question \cite[Problem 1]{favre_degeneration}
in our context
by establishing the continuous
(and indeed harmonic)
 extendibility of
the potential $t\mapsto L(f_t)-\alpha\log|t|^{-1}$ across $t=0$,
with the concrete value $\alpha=1/2$, for any non-exceptional $(g,h)$.

\begin{mainth}\label{teo_1}
$($i$)$ Pick $(g_0,h_0)$ as in \eqref{eq_gh}.
Then for every $\beta>0$ small enough, we have
\begin{gather}
\log\frac{(1-\beta)\cdot 4\bigl||H/G|-|c|\bigr|^{1/2}}{|G|^{1/2}}
\le L(f_t)-\frac{1}{2}\log|t|^{-1}\le
\log\frac{(1+\beta)\cdot 4\bigl(|H/G|+|c|\bigr)^{1/2}}{|G|^{1/2}}\label{eq_prelim}
\end{gather}
for every $t\in\bD^*$ close enough to $0$ and
every 
$(g,h)$ close enough to $(g_0,h_0)$, recalling
that 
$f_t=f_t(z,w;g,h)$ and $(G,H)=(G_g,H_g)$.
In particular, for every $(g,h)$,
the non-archimedean Lyapunov exponent $\alpha$ 
associated to the meromorphic family $(f_t)_{t\in\bD^*}$ equals $1/2$.

$($ii$)$ {Pick a non-exceptional $(g,h)$. Then
for the algebraic family $(f_t)_{t\in\bD^*}$
in \eqref{eq_ft} associated to this $(g,h)$}, 
the continuous and subharmonic 
function $t\mapsto L(f_t)-(1/2)\log|t|^{-1}$ 
is harmonic on $0<|t|\ll 1$ 
and extends harmonically
across $t=0$, satisfying
\begin{gather}
\lim_{t\to 0}\Bigl(L(f_t)-\frac{1}{2}\log|t|^{-1}\Bigr)
=\log\frac{4\max\bigl\{|c|,|H/G|\bigr\}^{1/2}}{|G|^{1/2}}.\label{eq:lim}
\end{gather}
\end{mainth}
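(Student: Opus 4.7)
The plan is to prove part (i) by a pointwise analysis of $\log|\det Df_t|$ on $\operatorname{supp}\mu_{f_t}$, and part (ii) by combining a rescaling that identifies the limiting invariant measure with a harmonicity argument exploiting non-exceptionality.

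For (i), I expand
\[
\det Df_t = -c + t\bigl[g_z\,p'(w) - h_z - c\,g_w\bigr] + t^2\bigl[g_z h_w - g_w h_z\bigr],
\]
whose degree-two part in $(z,w)$ has leading term $4tGzw$ as $t\to 0$ (from $g_z=2Gz+\cdots$ and $p'(w)=2w+\cdots$), all other quadratic coefficients being $O(t^2)$. Independently, I localize $\operatorname{supp}\mu_{f_t}$ via the regularity condition \eqref{eq:regular} and an iteration argument, obtaining upper bounds $|z|\lesssim|t|^{-1}$ and $|w|\lesssim|t|^{-1/2}$, with matching lower bounds produced by explicit repelling periodic cycles (for instance the non-Hénon fixed points at leading order $(z,w)\sim(1/(tG),\,\pm\sqrt{-(cG+H)/(tG^2)})$), locally uniformly in $(g,h)$. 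On $\operatorname{supp}\mu_{f_t}$ we then have $|\det Df_t|\asymp|4tGzw|\asymp|G|/|t|^{1/2}$, so integrating against $\mu_{f_t}$ via \eqref{eq_lyapunov} yields \eqref{eq_prelim}; Favre's characterization then forces $\alpha=1/2$.

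For (ii), set $u(t):=L(f_t)-\tfrac{1}{2}\log|t|^{-1}$; by (i), $u$ is continuous, subharmonic, and bounded on a punctured disc around $0$. To upgrade subharmonicity to harmonicity I show that the bifurcation measure associated to $(f_t)$ vanishes near $0$: under non-exceptionality $|H/G|\neq|c|$, the repelling cycles from (i) vary holomorphically and do not collide as $t$ varies, so $(f_t)$ is $J$-stable on $0<|t|\ll 1$; this yields harmonicity, and the boundedness from (i) then gives the harmonic extension of $u$ across $0$ by removal of singularities.

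To compute $u(0)$ I rescale via $\phi_t(Z,W):=(Z/(tG),\,W/\sqrt{t})$ and let $\tilde\mu_t:=(\phi_t^{-1})_*\mu_{f_t}$. A direct computation shows the conjugated map satisfies, as $t\to 0$,
\[
Z'=Z^2+O(\sqrt{t}),\qquad \sqrt{t}\,W'=W^2+\frac{c}{G}Z+\frac{H}{G^2}Z^2+O(\sqrt{t}).
\]
The first line forces the $Z$-projection of $\tilde\mu_t$ to converge to the Haar measure $\nu_Z$ on $\{|Z|=1\}$, and the second forces $\operatorname{supp}\tilde\mu_0$ to satisfy the algebraic relation $W^2=-Z(cG+HZ)/G^2$, which defines a smooth double cover of $\{|Z|=1\}$ precisely because $|H/G|\neq|c|$ (so $cG+HZ$ has no zero on the unit circle). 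Using $\log|\det Df_t|=\log|4tGzw|+o(1)$ on $\operatorname{supp}\mu_{f_t}$ and the change of variables $z=Z/(tG)$, $w=W/\sqrt{t}$,
\[
u(t)=\log 4+\int\log|Z|\,d\tilde\mu_t+\int\log|W|\,d\tilde\mu_t+o(1).
\]
The first integral tends to $\int\log|Z|\,d\nu_Z=0$, and Jensen's formula applied to $cG+HZ$ on the unit circle gives $\int\log|W|\,d\tilde\mu_0=\tfrac{1}{2}\log\max(|c|,|H/G|)-\tfrac{1}{2}\log|G|$, which assembles into \eqref{eq:lim}. The main obstacle is the rigorous justification of $\tilde\mu_t\to\tilde\mu_0$: the rescaled map itself does not converge to a non-degenerate holomorphic limit (its $W$-component blows up by $1/\sqrt{t}$), so standard convergence of maximal entropy measures under perturbations is unavailable; I would address this through pluripotential-theoretic control on the rescaled Green current of $f_t$, exploiting non-exceptionality to keep the limit curve smooth.
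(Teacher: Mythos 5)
Your overall strategy (localize the Julia set, reduce $\log|\det Df_t|$ to $\log|4tGzw|$ there, rescale and use the algebraic relation $W^2=-Z(cG+HZ)/G^2$ together with Jensen's formula, and prove harmonicity via $J$-stability) is the same one the paper uses, but three of its load-bearing steps are not actually established, and the paper's proof goes to genuine technical effort precisely at those points.

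First, the localization of $J_{f_t}$ in (i) is incomplete. Upper bounds on $|z|,|w|$ from the regularity condition are fine, but you also need \emph{lower} bounds of the same order valid on the \emph{entire} support of $\mu_{f_t}$ (and, for the harmonicity step, on $f_t^{-1}(U_t)$), because the other terms in $\det Df_t$ are of size $O(1)$ while $4tGzw$ is of size $|t|^{-1/2}$ only in the annular region; outside it, $\det Df_t$ is not dominated by the quadratic term and can even vanish. Exhibiting two explicit approximate repelling fixed points near $(1/(tG),\pm\sqrt{-(cG+H)/(tG^2)})$ shows that \emph{some} points of the Julia set lie there, not that \emph{all} of it does. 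The paper's Lemma~2.1 proves the full two-sided inclusion $J_{f_t}\subset V_t$ (indeed $f_t^{-1}(U_t)\Subset V_t$) by a careful contradiction/case analysis, and this is the key technical lemma that your sketch replaces with an assertion.

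Second, in (ii) the claim that ``the repelling cycles vary holomorphically and do not collide'' is exactly characterization (B) of $J$-stability in \cite[Theorem 1.1]{bbd}; you cannot take it as given without proving it. The paper instead verifies the far more checkable characterization (F): the post-critical set avoids $V_t\supset J_{f_t}$, which follows directly from Lemma~2.1 and the lower bound in \eqref{eq:hyperbolic} (this is where non-exceptionality enters), and then invokes $\text{(F)}\Rightarrow\text{(B)}$. Your route would have to re-derive something equivalent.

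Third, and as you yourself flag, the convergence $\tilde\mu_t\to\tilde\mu_0$ under your $\sqrt t$-rescaling is not established, and the degenerating rescaled map rules out the standard stability of Green currents. The paper sidesteps this entirely: with the (holomorphic) rescaling $(Z,W)=(Gtz,tw)$ it only needs the $Z$-marginal of $\tilde\mu_t$ to converge to Haar measure on $S^1$, and proves this by showing $\tilde f_t^n:\tilde f_t^{-n}(\tilde U_t)\to\tilde U_t$ is an unbranched $2^{2n}$-cover whose preimages of a radial cut distribute evenly as $t\to0$, then concludes via Carath\'eodory. The integral of $\log|w|$ is then reduced to the $Z$-marginal through the same algebraic relation you use, so no full two-dimensional limit measure is required. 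Until these three gaps are filled, the argument does not go through; the paper's Lemma~2.1 and the covering/Carath\'eodory argument are where the actual work happens.
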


Notice that a similar continuous extendability result
has been obtained
for meromorphic families of polynomials in one variable
by Favre--Gauthier \cite{fg_continuity} and that
examples of discontinuity at $t=0$ have been obtained
in \cite{dmo_discontinuity}
for meromorphic families of rational functions on $\bP^1$.

\subsection{Accumulation of the bifurcation locus 
to the H\'enon locus}
For a holomorphic family of rational functions on $\bP^1$,
the theory of $J$-stability/bifurcation, originating from the seminal papers by 
Ma\~{n}\'e-Sad-Sullivan \cite{mss}, Lyubich \cite{lyubich}, 
DeMarco \cite{demarco1,demarco2}, is now classical. 
A generalization of this theory to holomorphic families of
holomorphic endomorphisms of $\P^k$ 
parametrized by a complex manifold $M$ was recently
developed in
\cite{bbd,b_misiurewicz}. 
There the \emph{bifurcation locus} in the parameter space $M$
is {\itshape defined} 
as
the support of 
the \emph{bifurcation current} on $M$; the bifurcation current on $M$ is
the $dd^c$ of the Lyapunov exponent function
$\lambda\mapsto L(f_\lambda):=\int_{\bP^k}\log|\det Df_{\lambda}|\mu_{f_{\lambda}}$ on $M$,
where $\mu_{f_\lambda}$
is the unique maximal entropy measure of $f_\lambda$ on $\bP^k$. 
In dimension $k=2$, if in addition $M$ is simply connected, 
then 
the bifurcation locus coincides with, e.g., the complement in $M$
of the locus where
all
the repelling cycles of $f_\lambda$
in the 
second
Julia sets $J_{f_\lambda}$ of $f_\lambda$ move holomorphically.
We refer to \cite{bbd} for more details, and to \cite{b_misiurewicz} for
an analogous (slightly weaker)
characterization
valid in any dimension 
$k$.

Let us now focus on the family
\begin{gather}
f_{t,G,H} (z,w):=
\begin{pmatrix}
	   w\\
	   cz+p(w)
	  \end{pmatrix}+t\begin{pmatrix}
			  Gz^2\\
			  H z^2
			 \end{pmatrix},
\quad (t,G,H)\in\bD^*\times\bC^*\times\bC,
\label{eq_ftGH}
\end{gather}
of quadratic polynomial endomorphisms of $\bC^2$,
which are regular for every $t\in\bD^*$
since the leading homogeneous term
$(Gz^2,w^2+Hz^2)$ of $f_{t,G,H}$ maps only $(0,0)$ to $(0,0)$.

The following is our second principal result. 

\begin{mainth}\label{teo_2}
The bifurcation locus in the parameter space {$\bD^*\times\bC^*\times\bC$}
of the family $(f_{t,G,H})$ 
accumulates to $\{t=0\}$ in $\bD\times\bC^*\times\bC$
tangentially to the locus $|H/G| = |c|$ in $\bD^*\times\bC^*\times\bC$.
\end{mainth}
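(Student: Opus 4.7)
The approach is to analyze the potential $\tilde{L}(t,G,H):=L(f_{t,G,H})-\tfrac{1}{2}\log|t|^{-1}$ of the bifurcation current $T:=dd^c\tilde{L}$ on $M:=\bD^*\times\bC^*\times\bC$, whose support is by definition the bifurcation locus. Set $\Phi(G,H):=\log\bigl(4\max\{|c|,|H/G|\}^{1/2}/|G|^{1/2}\bigr)$, the boundary value supplied by \eqref{eq:lim}. Writing $\max\{\log|c|,\log|H/G|\}=\log|c|+\log^+|H/(cG)|$ and invoking Poincar\'e--Lelong for the meromorphic function $H/(cG)$ on $\bC^*\times\bC$, one checks that $dd^c\Phi$ vanishes identically on $\{|H/G|\neq|c|\}$ and restricts to a nonzero positive measure on every open set meeting $\{|H/G|=|c|\}$; thus $\Phi$ is pluriharmonic precisely on the non-exceptional locus.

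To rule out accumulation off the exceptional locus, I would fix a non-exceptional $(G_0,H_0)$ and a polydisc $\bD_\epsilon\times V$ around $(0,G_0,H_0)$ with $V\subset\{|H/G|\neq|c|\}$ chosen small enough that the estimates of Theorem \ref{teo_1}(i) hold locally uniformly on $V$. By Theorem \ref{teo_1}(ii), the slice $\tilde{L}(\cdot,G,H)$ is harmonic on $\bD_\epsilon$ for every $(G,H)\in V$, so the $t$-slice of the closed positive $(1,1)$-current $T$ vanishes on $\bD_\epsilon^*\times V$. The Cauchy--Schwarz inequality on the positive semidefinite coefficient matrix of $T$ then forces the vanishing of all $dt$- and $d\bar t$-components, and the closedness of $T$ gives that the remaining $(G,H)$-coefficients are independent of $t$; hence $T=\pi^*T_0$ on $\bD_\epsilon^*\times V$, where $\pi(t,G,H)=(G,H)$. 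Slicing $T$ at any $t_0\in\bD_\epsilon^*$ identifies $T_0=dd^c_{(G,H)}\tilde{L}(t_0,\cdot,\cdot)$; the local uniform bounds of Theorem \ref{teo_1}(i) together with the pointwise convergence of Theorem \ref{teo_1}(ii) force $\tilde{L}(t_0,\cdot,\cdot)\to\Phi$ in $L^1_{\mathrm{loc}}(V)$ as $t_0\to 0$, so $T_0=dd^c\Phi=0$ on $V$. Therefore $T\equiv 0$ on $\bD_\epsilon^*\times V$ and the bifurcation locus avoids a neighborhood of $(0,G_0,H_0)$.

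To produce accumulation on the exceptional locus, I would fix an exceptional $(G_0,H_0)$ and suppose for contradiction that $T$ vanishes on some $\bD_\epsilon^*\times W$, with $W$ a connected neighborhood of $(G_0,H_0)$. Then each $u_{t_0}:=\tilde{L}(t_0,\cdot,\cdot)$ is pluriharmonic on $W$; Theorem \ref{teo_1}(i) yields a uniform upper bound $M_K$ on any compact $K\subset W$, and Harnack's inequality applied to the nonnegative harmonic functions $M_K-u_{t_0}$, together with pointwise convergence $u_{t_0}(G_1,H_1)\to\Phi(G_1,H_1)$ at a fixed non-exceptional $(G_1,H_1)\in K$ (Theorem \ref{teo_1}(ii)), gives a matching uniform lower bound on interior compacts. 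Arzel\`a--Ascoli produces a subsequential locally uniform limit $u_*$ on $W$, pluriharmonic and equal to $\Phi$ on the dense open set $W\setminus\{|H/G|=|c|\}$; by continuity $u_*=\Phi$ identically on $W$, contradicting the non-pluriharmonicity of $\Phi$ at $(G_0,H_0)$.

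The central difficulty in both steps is upgrading the one-parameter harmonic extendability of Theorem \ref{teo_1}(ii) to an honest pluriharmonicity (or failure thereof) of $\tilde{L}$ in the full $(t,G,H)$ variables on an open subset of $M$. This is what the positivity-and-closedness input (Cauchy--Schwarz on the coefficient matrix, yielding a pullback decomposition $T=\pi^*T_0$) accomplishes in the first step, and what the compactness of pluriharmonic families bounded above delivers in the second.
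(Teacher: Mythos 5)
Your proposal is correct, but it takes a genuinely different route from the paper's proof, and it even establishes a slightly stronger conclusion. Here is a comparison.

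For the ``non-accumulation off the exceptional locus'' step, the paper re-runs the dynamical argument from the final paragraph of the proof of Theorem \ref{teo_1}: the forward critical orbit avoids $V_t$, so the criterion of \cite{bbd} gives stability (not just $t$-slice harmonicity) of the full three-parameter family near any $(0,G_0,H_0)$ with $|H_0/G_0|\neq|c|$, hence $T=0$ there directly. You instead start from the $t$-slice harmonicity of Theorem \ref{teo_1}(ii), kill the $t$-components of the closed positive $(1,1)$-current $T$ by Cauchy--Schwarz, use closedness to write $T=\pi^*T_0$, and then identify $T_0=dd^c\Phi=0$ via $L^1_{\mathrm{loc}}$ convergence of the potentials. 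This is a clean pure-potential-theoretic derivation from Theorem \ref{teo_1}, although it is longer than the paper's one-line appeal to \cite{bbd}, and a few steps (the Fubini argument showing that pointwise $t$-slice harmonicity for a continuous psh potential gives vanishing of the $(t,\bar t)$-coefficient as a distribution, and the measure-theoretic form of Cauchy--Schwarz for positive $(1,1)$-currents) should be spelled out if one wanted to write this up.

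For the ``accumulation occurs'' step, the two arguments diverge more substantially. The paper fixes $G_0$ and $R>2|cG_0|$ and shows directly that $H\mapsto L(f_{t,G_0,H})$ cannot be harmonic on $\{|H|<R\}$ for $0<|t|\ll 1$: otherwise the mean-value property over $\{|H|=R\}$, combined with the two-sided estimate \eqref{eq_prelim}, would force the inequality $\log\bigl(4|c|^{1/2}/|G_0|^{1/2}\bigr)\ge\log\bigl(4|R/|G_0|-|c||^{1/2}/|G_0|^{1/2}\bigr)$, which fails since $R/|G_0|-|c|>|c|$. That is an elementary, essentially one-variable argument. Your approach instead argues by contradiction at a \emph{fixed} exceptional $(G_0,H_0)$: if $T$ vanished on a cylinder $\bD^*_\epsilon\times W$, then each slice $u_{t_0}$ would be pluriharmonic on $W$, bounded above by Theorem \ref{teo_1}(i), and bounded below on interior compacta via Harnack together with the anchor convergence at a non-exceptional point; normal families then yield a pluriharmonic limit $u_*$ which equals $\Phi$ on the dense set $W\setminus\{|H/G|=|c|\}$, hence everywhere by continuity, contradicting the non-pluriharmonicity of $\Phi$ across the exceptional locus. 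This is heavier machinery (Harnack, Arzel\`a--Ascoli, normality of pluriharmonic families), but it yields the stronger assertion that the bifurcation locus accumulates at \emph{every} point of $\{t=0\}\cap\{|H/G|=|c|\}$, not merely at each circle $\{|H/G_0|=|c|\}$ for each $G_0$ as the paper's argument gives.

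Both proofs are valid; the paper's is shorter and more elementary, yours is more systematically potential-theoretic and a bit more informative in its conclusion.
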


The set $\operatorname{Hol}_2(\P^2)$ of all quadratic holomorphic endomorphisms of $\bP^2$
is a Zariski open subset in $\bP^{N_2}$,
where $N_2=3\cdot 4!/(2!2!)-1=17$, in the coefficients parametrization,
and in turn is regarded as {the parameter space of}
the holomorphic family of (all) quadratic holomorphic endomorphisms
of $\bP^2$. We also note that 
all H\'enon maps live
in $\bP^{N_2}\setminus{\operatorname{Hol}_2(\P^2)}$ in the coefficients parametrization.

The following immediate consequence of Theorem \ref{teo_2} is also
one of our principal results,
and answers affirmatively a question by Johan Taflin. 

\begin{maincoro}\label{cor_bif_henon}
The H\'enon locus in $\bP^{N_2}\setminus{\operatorname{Hol}_2(\P^2)}$
is accumulated by the bifurcation locus of {$\operatorname{Hol}_2(\P^2)$}.
\end{maincoro}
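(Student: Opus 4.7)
The plan is to deduce the corollary from Theorem \ref{teo_2} combined with the Friedland--Milnor normal form for quadratic H\'enon maps.

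By the Friedland--Milnor classification, every quadratic H\'enon map of $\bC^2$ is affinely conjugate to one of the form $H_{c,p}(z,w)=(w,cz+p(w))$ with $c\in\bC^*$ and $p(w)=w^2+c_1w+c_2$. The induced action of the affine group on $\bP^{N_2}$ by conjugation preserves $\operatorname{Hol}_2(\bP^2)$, its bifurcation locus, and the H\'enon locus. Hence it suffices to show that every such normal form $H_{c,p}$ is accumulated in $\bP^{N_2}$ by bifurcation points of $\operatorname{Hol}_2(\bP^2)$.

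Fix such $H_{c,p}$ and form the sub-family $(f_{t,G,H})$ of \eqref{eq_ftGH} built from these prescribed $c,c_1,c_2$. Homogenizing with a third coordinate $u$, the endomorphism $f_{t,G,H}$ is represented by the point
\[
[tGz^2+wu\,:\,tHz^2+czu+w^2+c_1wu+c_2u^2\,:\,u^2]\in\bP^{N_2},
\]
from which one reads off that $f_{t,G,H}\to H_{c,p}$ in $\bP^{N_2}$ as $t\to 0$, uniformly on compact subsets of $(G,H)\in\bC^*\times\bC$. By Theorem \ref{teo_2} the bifurcation locus of this sub-family accumulates to $\{t=0\}$ in $\bD\times\bC^*\times\bC$, so one may select an accumulation point $(0,G_0,H_0)$ with $G_0\neq 0$ (and $|H_0/G_0|=|c|$ by the tangentiality) and extract a sequence $(t_n,G_n,H_n)$ of sub-family bifurcation parameters with $t_n\to 0$ and $(G_n,H_n)\to(G_0,H_0)$ in $\bC^*\times\bC$.

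The final input is the general principle that a bifurcation point of a holomorphic sub-family is automatically a bifurcation point of the ambient family: a neighborhood of stability in the ambient family restricts to one in the sub-family. Applied to the holomorphic map $\bD^*\times\bC^*\times\bC\to\operatorname{Hol}_2(\bP^2)$, $(t,G,H)\mapsto f_{t,G,H}$, this shows that each $f_{t_n,G_n,H_n}$ lies in the bifurcation locus of $\operatorname{Hol}_2(\bP^2)$. Combined with the convergence above, we obtain a sequence of bifurcation points of $\operatorname{Hol}_2(\bP^2)$ tending in $\bP^{N_2}$ to $H_{c,p}$, completing the proof. The deduction is essentially a routine repackaging of Theorem \ref{teo_2}; the only non-formal ingredient is the pullback principle for bifurcation, which is immediate from the stability characterization of the bifurcation locus recalled in the introduction (see \cite{bbd,b_misiurewicz}).
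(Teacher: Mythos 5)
Your proposal is correct and fills in, with reasonable care, the details that the paper compresses into the word ``immediate.'' The three ingredients you isolate are exactly the ones implicit in the paper's deduction: (a) the Friedland--Milnor reduction of an arbitrary quadratic H\'enon map to the normal form $(z,w)\mapsto(w,cz+p(w))$, which is why the paper can fix $c$ and $p$ once and for all at the outset and still claim the full corollary; (b) the coefficient-wise convergence $f_{t,G,H}\to H_{c,p}$ in $\bP^{N_2}$ as $t\to 0$, which your homogenization displays explicitly and correctly; and (c) the monotonicity of bifurcation under restriction to a sub-family, which follows at once from the characterization of stability in \cite{bbd} (pluriharmonicity of the Lyapunov function restricts, so non-bifurcation restricts, hence sub-family bifurcation points are ambient bifurcation points). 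One small point of hygiene worth keeping in mind: Theorem~\ref{teo_2} as stated guarantees accumulation to $\{t=0\}$ inside $\bD\times\bC^*\times\bC$, which is what licenses choosing a limit $(0,G_0,H_0)$ with $G_0\neq 0$; the parenthetical remark that necessarily $|H_0/G_0|=|c|$ is not needed for the argument and could be dropped. Overall this is the same route as the paper's, made explicit.
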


The proof of Theorem \ref{teo_2}
(so that of Corollary) is based on Theorem \ref{teo_1}
and is
purely analytical. In former studies,
the presence of bifurcations 
 has
been established 
by means of more geometric arguments; 
see e.g.\ \cite{bt_desboves, dujardin_nondensity, bb_hausdorff, taflin_blender, ab_skew,biebler_lattes}. 

\subsection{Individual Lyapunov exponents}
{It follows from a result by Pham \cite{pham} that, 
for any holomorphic family $(f_t)_{t\in M}$ parametrized
by a complex manifold $M$ 
of holomorphic endomorphisms of $\P^k$ of a given degree $d>1$,
if we denote by $\chi_1(f_t) \geq \dots \geq \chi_k(f_t)$
all the individual Lyapunov exponents of $f_t$ 
for each $t\in M$, then for every $j\in\{1,\ldots,k\}$, the function
$t\mapsto \sum_{\ell=1}^j\chi_\ell(f_t)$ on $M$ is plurisubharmonic.}

Let us focus on our family $(f_t)$ as in \eqref{eq_ft}. 
{Recall that the function 
$t\mapsto L(f_t)\equiv\chi_1(f_t)+\chi_2(f_t)$
is continuous and subharmonic on $0<|t|\ll 1$, so
by the above result by Pham, the function $t\mapsto\chi_1(f_t)$ is 
subharmonic and the function $t\mapsto\chi_2(f_t)$ is lower semicontinuous,
in general.}
We conclude this introduction with
the following precision of
Theorem \ref{teo_1}(ii). 

\begin{mainth}\label{th:individual}
Pick a non-exceptional $(g,h)$.
Then
for the algebraic family $(f_t)_{t\in\bD^*}$
{in \eqref{eq_ft} associated to this $(g,h)$}, 
the functions $t\mapsto\chi_1(f_t)- \frac{1}{2}\log|t|^{-1}$ and 
$t\mapsto\chi_2(f_t)$ are {harmonic} on $0<|t|\ll 1$
and extend
harmonically
across $t=0$, satisfying
\begin{gather*}
\lim_{t\to 0}\Bigl(\chi_1(f_t)-\frac{1}{2}\log|t|^{-1}\Bigr)=\log\frac{2\max\bigl\{|c|,|H/G|\bigr\}^{1/2}}{|G|^{1/2}
}
\quad\text{and}\quad\lim_{t\to 0}\chi_2(f_t)=\log 2. 
\end{gather*}
\end{mainth}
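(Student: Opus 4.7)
The plan is to combine Theorem \ref{teo_1}(ii), Pham's subharmonicity of $\chi_1$ \cite{pham}, and sharp lower bounds on $\chi_2$ for regular polynomial endomorphisms. Write $u_1(t):=\chi_1(f_t)-\frac{1}{2}\log|t|^{-1}$ and $u_2(t):=\chi_2(f_t)$. By \cite{pham}, $\chi_1(f_t)$ is subharmonic on $\bD^*$, and by Theorem \ref{teo_1}(ii) the sum $u_1+u_2=L(f_t)-\frac12\log|t|^{-1}$ is harmonic on $\bD^*_\epsilon:=\{0<|t|<\epsilon\}$ (for some $\epsilon>0$) and extends harmonically across $t=0$ to the value $\log\frac{4\max\{|c|,|H/G|\}^{1/2}}{|G|^{1/2}}$; consequently $u_2$ is superharmonic on $\bD^*_\epsilon$. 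Since the bifurcation current vanishes on $\bD^*_\epsilon$ (i.e.\ $(f_t)$ is stable there in the sense of \cite{bbd}), repelling cycles of $f_t$ move holomorphically in $t$, and a standard argument upgrades the (sub/super)harmonicity of $u_1$ and $u_2$ to harmonicity on $\bD^*_\epsilon$.

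For regular polynomial endomorphisms of $\bC^2$ of degree $d=2$ one has the sharpened bound $\chi_2(f_t)\ge\log d=\log 2$, stronger than the general Briend--Duval $\chi_2\ge\log\sqrt d$ and reflecting the geometric structure near the super-attracting line at infinity of $\bP^2$ (see e.g.\ \cite{BedfordJonsson00}). Hence $u_1\le L-\frac12\log|t|^{-1}-\log 2$ is bounded above near $t=0$, and together with the harmonic extension in Theorem \ref{teo_1}(ii) this gives $\limsup_{t\to 0}u_1\le\log\frac{2\max\{|c|,|H/G|\}^{1/2}}{|G|^{1/2}}$ and correspondingly $\liminf_{t\to 0}u_2\ge\log 2$.

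The main obstacle is the matching inequality $\liminf_{t\to 0}u_2\le\log 2$, equivalently $\liminf_{t\to 0}u_1\ge\log\frac{2\max\{|c|,|H/G|\}^{1/2}}{|G|^{1/2}}$. The expected strategy is to analyse the dynamics of $f_t$ near $L_\infty$: as $t\to 0$ the support of $\mu_{f_t}$ concentrates projectively near $L_\infty$, and along the direction tangent to $L_\infty$ the action of $Df_t$ is asymptotically that of the induced degree-$2$ self-map of $L_\infty\cong\bP^1$, whose equilibrium Lyapunov exponent is $\log 2$; this exhibits a direction whose asymptotic expansion tends to $\log 2$ and yields the required upper bound on $\chi_2(f_t)$. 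Combined with the lower bound this gives $\lim_{t\to 0}\chi_2(f_t)=\log 2$ and $\lim_{t\to 0}u_1(t)=\log\frac{2\max\{|c|,|H/G|\}^{1/2}}{|G|^{1/2}}$; the harmonic extension across $t=0$ then follows from the standard removable-singularity theorem for bounded harmonic functions on a punctured disk, completing the proof.
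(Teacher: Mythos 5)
Your outline diverges substantially from the paper's proof, and as written it has three genuine gaps that keep it from being a proof.

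First, the inequality $\chi_2(f_t)\ge\log 2$ that you invoke for regular polynomial endomorphisms is not justified. The paper only records the general Briend--Duval bound $\chi_2\ge\log\sqrt 2$, and no stronger pointwise lower bound on $\chi_2$ is used anywhere; Bedford--Jonsson's results concern the \emph{sum} $L(f)$ (and the induced map on $L_\infty$), not the smallest individual exponent. If this bound were available and sharp, the paper would hardly need the delicate cone analysis that it in fact carries out, and one would also have to explain why $\chi_2(f_t)\to\log 2$ is an extremal degeneration.

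Second, you correctly identify the matching inequality $\liminf_{t\to0}\chi_2(f_t)\le\log 2$ as the main obstacle, but you only sketch a strategy ("the action of $Df_t$ tangent to $L_\infty$ is asymptotically the induced degree-$2$ map of $L_\infty$, whose exponent is $\log 2$") without proving anything, and the heuristic itself is shaky: the map induced by $f_t$ on $L_\infty$ has degree-$2$ part $[z:w]\mapsto[tGz^2+\dots:w^2+tHz^2+\dots]$, which degenerates as $t\to0$ to a constant map, so its equilibrium exponent does not obviously converge to $\log 2$. The paper instead proves a cone-invariance estimate (Claim in Section~\ref{sec:individual}): on $J_{f_t}$ the differential $Df_t$ preserves a cone $\cC_{\eta_0}$ and expands vectors there at rate $(1\pm\delta)|2w|$. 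Combined with Oseledec and Birkhoff this yields $\chi_1(f_t)=\log 2+\int\log|w|\,d\mu_{f_t}+O(\delta)$, the integral is then evaluated exactly as in the proof of Theorem~\ref{teo_1}(ii), and $\chi_2=L-\chi_1$. This computes both limits simultaneously, with no need for separate lower and upper bounds on $\chi_2$.

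Third, the harmonicity step cannot be dismissed as "a standard argument": knowing that $u_1$ is subharmonic, $u_2$ superharmonic, and $u_1+u_2$ harmonic does not force either summand to be harmonic, and holomorphic motion of repelling cycles by itself does not make $\chi_1$ harmonic. The paper's argument uses the full strength of Berteloot--Dupont--Molino's approximation \eqref{eq:BDM} of both $\chi_1$ and $\chi_1+\chi_2$ by averages over $R(f_t^n)\cap J_{f_t}$, together with a second use of the cone estimate to show that the two eigenvalues of $D(f_t^n)_p$ at each repelling point $p$ have distinct moduli (both $>1$), so that the approximating sums split into harmonic functions of $t$ along the holomorphic motion of cycles, and finally Harnack's theorem passes harmonicity to the limit. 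None of this appears in your proposal, so you should either carry out this chain of reasoning or find a genuine substitute; as stated, all three gaps would need to be filled.
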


The proof of the harmonicity of $\chi_1(f_t),\chi_2(f_t)$ 
is based on {the full strength of}
Berteloot--Dupont--Molino's approximations, {that is, approximations of} 
{not only $\chi_1(f_t)+\chi_2(f_t)(=L(f_t))$ but also $\chi_1(f_t)$} 
\cite[Theorem 1.5]{BDM08}.

\subsection{Organization of the article}
In Section \ref{sec:key}, we establish a key estimate, 
which is a development of an estimate appearing in \cite{dujardin_nondensity}.
In Section \ref{sec:proofconti}, we show Theorem \ref{teo_1}.
In Section \ref{section_bif}, we show Theorem \ref{teo_2} (and Corollary)
using Theorem \ref{teo_1}(i).
We also include a comparison of our analysis here with the
study of bifurcations of quadratic polynomial skew products in \cite{ab_skew}.
In Section \ref{sec:individual}, we show Theorem \ref{th:individual},
which is a precision of Theorem \ref{teo_1}(ii). To make such a precision,
we also recall some standard facts from ergodic theory 
as well as  Berteloot--Dupont--Molino's approximations.

\section{A key lemma}
\label{sec:key}

In this section we show {a lemma} concerning the position
of the (second) Julia set $J_{{f_t}}$ of $f_t$, 
{which is} 
needed in the sequel and 
inspired by \cite[Lemma 5.2]{dujardin_nondensity}. From now on,
set $B(r):=\{z\in\bC:|z|<r\}$ for every $r>0$ and
$A(r,s):=\{z\in\bC:r<|z|<s\}$ for any $r,s\in\bR$ satisfying $0<r<s$;
as convention, we also set $A(0,s):=B(s)$ for every $s>0$.

\begin{lemma}\label{lemma_stime}
Pick $(g,h)=(g_0,h_0)$ {as in \eqref{eq_gh}}. Then
there is $\beta\in(0,{1/2})$ so small that
for every $(g,h)$ close enough to $(g_0,h_0)$ 
and every $t\in\bD^*$ close enough to $0$, 
recalling {that}
$f_t=f_t(z,w;g,h)$ and $(G,H)=(G_g,H_g)$
and
setting 
\begin{gather*}
\begin{aligned}
U_t=U_t(\beta;g,h)& :=A\biggl(\frac{1-\beta}{|Gt|},\frac{1+\beta}{|Gt|}\biggr)\times 
B\biggl(\frac{\bigl(|H/G|+|c|\bigr)^{1/2}(1+2\beta)}{|Gt|^{1/2}}\biggr)\quad\text{and}\\
V_t = V_t (\beta;g,h) &:=
A\biggl(\frac{1-\beta}{|Gt|},\frac{1+\beta}{|Gt|}\biggr)\times
A\biggl(
\frac{\bigl||H/G|-|c|\bigr|^{1/2}(1-2\beta)}{|Gt|^{1/2}},
\frac{\bigl(|H/G|+|c|\bigr)^{1/2}(1+2\beta)}{|Gt|^{1/2}}\biggr),
\end{aligned}
\end{gather*}
we have $f_t^{-1}(U_t)\Subset V_t$, and in particular, $J_{f_t}\subset V_t$. 
\end{lemma}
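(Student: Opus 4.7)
The plan is to derive, from the assumption $f_t(z,w)=(z',w')\in U_t$, the annular bounds on $(z,w)$ that define $V_t$, by a rescaling argument in which $(1/|Gt|)$ is the natural length-scale for $|z|$ and $(1/|Gt|^{1/2})$ for $|w|$. Writing $g = Gz^2 + \tilde g$, $h = Hz^2 + \tilde h$, and expanding componentwise gives
\[
z' = tGz^2 + w + t\tilde{g}(z,w), \qquad w' = w^2 + cz + tHz^2 + c_1 w + c_2 + t\tilde{h}(z,w),
\]
and I would introduce the rescaled variable $\zeta := tGz$, so that the condition $|z'|\in((1-\beta)/|Gt|,(1+\beta)/|Gt|)$ becomes $|tGz'|\in(1-\beta,1+\beta)$. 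Under the ansatz $|z|\sim 1/|Gt|$ and $|w|\lesssim 1/|Gt|^{1/2}$, the terms $t\tilde g,\ t\tilde h,\ c_1 w,\ c_2$ are all perturbative (of order $O(|t|^{-1/2})$, hence $o(1/|Gt|)$).

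\textbf{Controlling $|z|$.} Multiplying the first equation by $tG$ gives $\zeta^2 = tGz' - tGw - t^2G\tilde g(z,w)$. Since $|tGw|$ and $|t^2G\tilde g(z,w)|$ are $O(|t|^{1/2})$ under the ansatz, while $|tGz'|\in(1-\beta,1+\beta)$, one finds $|\zeta|^2\in(1-\beta-\epsilon(t),\,1+\beta+\epsilon(t))$ with $\epsilon(t)\to 0$ as $t\to 0$. Hence $|\zeta|\in(\sqrt{1-\beta-\epsilon(t)},\sqrt{1+\beta+\epsilon(t)})$, which for $\beta\in(0,1/2)$ small and $|t|$ small lies \emph{strictly} inside $(1-\beta,1+\beta)$ since $\sqrt{1\pm\beta}$ is strictly closer to $1$ than $1\pm\beta$. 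This is the required $z$-annular bound with room to spare, which will give the compact containment for the $z$-coordinate.

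\textbf{Controlling $|w|$.} Using $cz + tHz^2 = z(c+(H/G)\zeta)$ and solving the second equation for $w^2$ gives
\[
w^2 \;=\; w' \;-\; z\bigl(c+(H/G)\zeta\bigr)\;-\;c_1w-c_2-t\tilde h(z,w),
\]
where, under the ansatz, every term on the right other than $z(c+(H/G)\zeta)$ is $O(1/|Gt|^{1/2})=o(1/|Gt|)$, while $|z(c+(H/G)\zeta)| = \Theta(1/|Gt|)$. Thus $|w|^2 = |z|\,|c+(H/G)\zeta|\,(1+o(1))$. The triangle inequality gives $|c+(H/G)\zeta|\le|c|+|H/G|\sqrt{1+\beta}$, whence $|w|^2\le (1+\beta)(|c|+|H/G|)/|Gt|$, producing $|w|<(1+2\beta)(|c|+|H/G|)^{1/2}/|Gt|^{1/2}$ for small $\beta$. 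In the non-exceptional case $|c|\neq|H/G|$, the reverse triangle inequality gives $|c+(H/G)\zeta|\ge\bigl||c|-|H/G||\zeta|\bigr|$; since $|\zeta|$ stays in $(\sqrt{1-\beta},\sqrt{1+\beta})$ and $|c|/|H/G|\neq 1$, shrinking $\beta$ keeps this range disjoint from $\{|c|/|H/G|\}$, so $|c+(H/G)\zeta|\ge\bigl||c|-|H/G|\bigr|(1-O(\beta))$. Combined with $|z|\ge\sqrt{1-\beta}/|Gt|$, this yields $|w|^2\ge\bigl||H/G|-|c|\bigr|(1-O(\beta))/|Gt|$, hence $|w|>\bigl||H/G|-|c|\bigr|^{1/2}(1-2\beta)/|Gt|^{1/2}$ after taking $\beta$ small enough so the $O(\beta)$ correction is dominated by the target gap. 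In the exceptional case the inner radius of the $V_t$-annulus in $w$ vanishes and the lower bound is vacuous. All constants depend continuously on the coefficients of $(g,h)$, so the estimates hold uniformly for $(g,h)$ in a small neighbourhood of $(g_0,h_0)$.

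\textbf{From $f_t^{-1}(U_t)\Subset V_t$ to $J_{f_t}\subset V_t$.} Since the outer radius of the $V_t$-annulus in $w$ equals the radius of the $U_t$-ball in $w$ and the $z$-annuli coincide, we have $V_t\subset U_t$; therefore $f_t^{-n}(U_t)\subset U_t$ for every $n\ge 0$. By the Briend--Duval equidistribution of preimages of a generic point $a\in U_t$ to $\mu_{f_t}$, the support of $\mu_{f_t}$, i.e.\ $J_{f_t}$, lies in $\overline{U_t}$; applying $f_t^{-1}$ once and using total invariance together with the properness of $f_t$ gives $J_{f_t}=f_t^{-1}(J_{f_t})\subset f_t^{-1}(\overline{U_t})=\overline{f_t^{-1}(U_t)}\subset V_t$.

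The main obstacle is matching the constants: the rescaled equations naturally produce factors $\sqrt{1\pm\beta}$ and $\bigl||c|-|H/G|\bigr|(1-O(\beta))$, and one must verify that after taking square roots these are strictly nested inside the radii $(1\pm\beta)/|Gt|$ and $(\cdots)(1\pm 2\beta)/|Gt|^{1/2}$ defining $V_t$. This is delicate near the exceptional locus $|H/G|=|c|$, but since $\beta$ is free to be chosen small after fixing $(g_0,h_0)$, all $O(\beta)$ corrections are ultimately absorbed.
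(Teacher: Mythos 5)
Your proposal has the right geometric intuition (the rescaling $\zeta=tGz$, and that the $z$- and $w$-coordinates of a preimage should live at scales $1/|Gt|$ and $1/|Gt|^{1/2}$), and the constant-matching at the end, as well as the deduction of $J_{f_t}\subset V_t$ from $f_t^{-1}(U_t)\Subset V_t$ via pullback of measures, are essentially sound. However, there is a genuine logical gap at the core of the argument: every one of your quantitative estimates is derived \emph{under the ansatz} $|z|\sim 1/|Gt|$, $|w|\lesssim 1/|Gt|^{1/2}$, which is --- up to the precise constants --- exactly the statement you are trying to prove. For instance, to conclude that $|tGw|$ and $|t^2G\tilde g(z,w)|$ are $O(|t|^{1/2})$ in the step controlling $|z|$, you already need $|w|=O(|t|^{-1/2})$; and to declare $t\tilde h$, $c_1w$, $c_2$ ``perturbative'' in the step controlling $|w|$, you again need both bounds. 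What is missing is an a priori argument ruling out the other asymptotic regimes of $(z,w)$: for example $|w|$ much larger than $|t|^{-1/2}$ (or even comparable to $|tGz^2|$ with near-cancellation in $z'=w+tGz^2+\cdots$), or $|z|$ far outside the scale $|t|^{-1}$ with compensating cancellations in $w'=w^2+cz+tHz^2+\cdots$. These scenarios are not ruled out by the mere membership $(z',w')\in U_t$, and ruling them out is the actual content of the lemma.

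This is precisely what the paper's proof handles, via a contradiction/subsequence argument whose technical heart is the Claim that $w_n+t_n\tilde g_n(z_n,w_n)=o(t_nz_n^2)$ for any putative sequence of counterexamples. Establishing that Claim requires an exhaustive case analysis on which of $|w_n|$, $|t_nz_nw_n|$, $|t_nw_n^2|$ is dominant, using in each case the constraint that the image $(u_n,v_n)$ lies in $\overline{U_{t_n}}$ to reach a contradiction (e.g.\ deriving $w_n=O(|t_n|^{-1/4})$ from the bound on $|v_n|$ and then contradicting a lower bound implied by the dominance assumption). Only after that Claim is in place does one get $z_n\sim (u_n/(G_nt_n))^{1/2}$ unconditionally, and only then can the $w$-bound be extracted from the second equation. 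To repair your proof you would need to either reproduce such a case analysis, or set up a genuine bootstrap (e.g.\ a connectedness-in-$t$ or boundary argument) showing that the ansatz region is absorbing --- neither of which is done in the proposal as written.
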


\begin{proof}
{Pick $(g_0,h_0)$}. 
Let us see the former assertion. Suppose to the contrary that there exist
\begin{itemize}
 \item a sequence $(\beta_n)$ in $(0,1)$ tending to $0$ as $n\to\infty$, 
 \item a sequence $((g_n,h_n))$ tending to $(g_0,h_0)$ as $n\to\infty$,
 \item a sequence $(t_n)$ in $\bD^*$ tending to $0$ as $n\to\infty$, and 
 \item a sequence $((z_n,w_n))$ in $\bC^2$ 
\end{itemize}
such that for every $n\in\bN$, 
{we have $(z_n,w_n)\in\bC^2\setminus V_{t_n}(\beta_n;g_n,h_n)$ 
 and}
\begin{gather*}
 (u_n,v_n):=f_{t_n}(z_n,w_n;g_n,h_n)\in\overline{U_{t_n}(\beta_n;g_n,h_n)}.
\end{gather*}

\begin{claim}
As $n\to\infty$, $w_n+t_n\tilde{g}_n(z_n,w_n)=o\bigl(t_nz_n^2\bigr)$. 
\end{claim} 

\begin{proof}
Otherwise,
taking a subsequence if necessary, 
there exists $C>0$ such that for every $n\in\bN$,
\begin{gather}
 |w_n+t_n\tilde{g}_n(z_n,w_n)|\ge C|t_nz_n^2|.\label{eq:contradiction} 
\end{gather}
Let us first see that, taking a further subsequence if necessary,
there exists $C'>0$ such that for every $n\in\bN$,
\begin{gather}
\max\{|w_n|,|t_n z_n w_n |, |t_n w_n^2 |\}
\ge C' \max \{|t_n z_n^2|,|t_n|^{-1}\};\tag{\ref{eq:contradiction}$'$}\label{eq_max_max}
\end{gather}
indeed, {taking a subsequence if necessary, there are exactly two possibilities;}

{\bfseries (a)} if $|t_n z_n^2|\ge |t_n|^{-1}$ for every $n\in\bN$,
then also by \eqref{eq:contradiction},
we have $|w_n+t_n\tilde{g}_n(z_n,w_n)|\ge C\max\{|t_nz_n^2|,|t_n|^{-1}\}$,
so if $n\gg 1$, then 
$\max\{1,|g_{zw}|,|g_{ww}|/2\}\max\{|w_n|,|t_nz_nw_n|,|t_nw_n^2|\}\ge C\max\{|t_nz_n^2|,|t_n|^{-1}\}$ (recall $\tilde{g}_n(z,w):=g_n(z,w)-G_nz^2$ and $g_{zw},g_{ww}\in\bC$), which yields \eqref{eq_max_max}
in this case.

{\bfseries (b)} If {$|t_n z_n^2|\le|t_n|^{-1}$ for every $n\in\bN$} but,
to the contrary, 
$\max\{|w_n|,|t_n z_n w_n |, |t_n w_n^2 |\}=o(t_n^{-1})$ as $n\to\infty$,
then {$|z_n|\le|t_n|^{-1}$ for every $n\in\bN$, and then 
under the assumption \eqref{eq:contradiction},}
recalling 
$\tilde{g}_n(z,w):=g_n(z,w)-G_nz^2$,
{we also have $t_nz_n^2=o(t_n^{-1})$ as $n\to\infty$,
that is, $z_n=o(t_n^{-1})$ as $n\to\infty$.
Then we must have
$u_n=w_n+t_ng(z_n,w_n)=o(t_n^{-1})$ as $n\to\infty$,}
which contradicts $(u_n,v_n)\in\overline{U_{t_n}(\beta_n;g_n,h_n)}$
for every $n\in\bN$. Hence \eqref{eq_max_max} also holds in this case.
 
Once \eqref{eq_max_max} is at our disposal, we can
{deduce}
a contradiction
as follows. Taking a subsequence if necessary, there are exactly three possibilities.

{\bfseries (1)}
If $|w_n|\ge\max\{|t_nz_nw_n|,|t_nw_n^2|\}$ 
for every $n\in\bN$, then by \eqref{eq_max_max}, we have 
$|w_n|\ge C'|t_n|^{-1}$ and, moreover,
$|z_n|^2\le |t_n^{-1}w_n|/C'\le(|w_n|/C')^2$, 
that is, $|z_n|\le|w_n|/C'$ for every $n\in\bN$. 
Then $v_n:=cz_n+p(w_n)+t_{{n}}h(z_n,w_n)
= (1+o(1))w_n^2$ as $n\to\infty$, so 
by $(u_n,v_n)\in\overline{U_{t_n}(\beta_n;g_n,h_n)}$,
we have $w_n=O(|t_n|^{-1/4})$ as $n\to\infty$. 
{Then we must have $0<C'\le|t_n^{-1}w_n|=O(|t_n|^{3/4})\to 0$ as $n\to\infty$,
which is impossible.}

{\bfseries (2)} 
If $|t_nz_nw_n|\ge\max\{|w_n|,|t_nw_n^2|\}$ for every $n\in\bN$, then 
for every $n\in\bN$,
we have {$|z_n|\ge|t_n|^{-1}$ 
and,}
by \eqref{eq_max_max}, also have $|z_n|\le|w_n|/C'$, 
so that $|t_nz_n^2|\le|t_nw_n^2|/(C')^2$ and $|t_nz_nw_n|\le|t_n w_n^2|/C'$.
Then $v_n:=cz_n+p(w_n)+t_{{n}}h(z_n,w_n)=(1+o(1))w_n^2$ as $n\to\infty$, so 
by $(u_n,v_n)\in\overline{U_{t_n}(\beta_n;g_n,h_n)}$,
we have $w_n = O(|t_n|^{-1/4})$ as $n\to\infty$. 
{Then we must have 
$1\le|t_nz_n|\le|t_nw_n|/C'=O(|t_n|^{3/4})\to 0$ as $n\to\infty$, which is impossible.}
 
{\bfseries (3)} 
If $|t_nw_n^2|\ge\max\{|w_n|,|t_nz_nw_n|\}$ for every $n\in\bN$, then 
for every $n\in\bN$, we have {$|w_n|\ge|t_n|^{-1}$, and}
by \eqref{eq_max_max}, also have $|w_n|\ge\sqrt{C'}|z_n|$, 
so that $|t_nz_n^2|\le|t_nw_n^2|/C'$ and $|t_nz_nw_n|\le|t_n w_n^2|/\sqrt{C'}$. 
Then $v_n:=cz_n+p(w_n)+t_{{n}}h(z_n,w_n)=(1+o(1))w_n^2$ as $n\to\infty$, so 
by $(u_n,v_n)\in\overline{U_{t_n}(\beta_n;g_n,h_n)}$,
we have $w_n = O(|t_n|^{-1/4})$ as $n\to\infty$. 
{Then we must have $1\le|t_nw_n|=O(|t_n|^{3/4})\to 0$ as $n\to\infty$,
which is impossible.}
Hence the claim holds.
\end{proof}

For every $n\in\bN$ large enough,
 by the equality 
$u_n=w_n+G_nt_nz_n^2+t_n\tilde{g}_n(z_n,w_n)$ and the above Claim, we have
$z_n=(1+o(1))(u_n/(G_n t_n))^{1/2}$ as $n\to\infty$, so that
{recalling that} $(u_n,v_n)\in\overline{U_{t_n}(\beta_n;g_n,h_n)}$ 
and $\beta_n{\searrow 0}$, we have
\begin{gather*}
z_n\in A\biggl(\frac{(1+o(1))(1-{\beta_n})^{1/2}}{|G_nt_n|},\frac{(1+o(1))(1+{\beta_n})^{1/2}}{|G_nt_n|}\biggr)\subset
A\biggl(\frac{1-{\beta_n}}{|G_nt_n|},\frac{1+{\beta_n}}{|G_nt_n|}\biggr).
\end{gather*}
Moreover,
{since}
$v_n=cz_n+p(w_n)+t_{n} h(z_n,w_n)$ and
$(u_n,v_n)\in\overline{U_{t_n}(\beta_n;g_n,h_n)}$ for every $n\in\bN$, we have
\begin{gather*}
 w_n^2+c_1w_n=v_n-t_nh(z_n,w_n)-cz_n-c_2
=-t_n H_n z_n^2-cz_n+o(t_n^{-1})\quad\text{as }n\to\infty.
\end{gather*}
{Since}
$(z_n,w_n)\not\in V_{t_n}{(\beta_n;g_n,h_n)}$ for every $n\in\bN$,
taking a subsequence if necessary, {there are two possibilities};

{\bfseries (i)}
if $|w_n|\le\bigl||H_n/G_n|-|c|\bigr|^{1/2}(1-2\beta_n)/|G_nt_n|^{1/2}$ 
for every $n\in\bN$, then $w_n=O(t_n^{-1/2})$ as $n\to\infty$, and then
$w_n^2=-t_n H_n z_n^2-cz_n+o(t_n^{-1})$ as $n\to\infty$.

{\bfseries (ii)}
Alternatively, if $|w_n|\ge(|H_n/G_n|+|c|)^{1/2}(1-2\beta_n)/|G_nt_n|^{1/2}$
for every $n\in\bN$, then $w_n=o(w_n^2)$ as $n\to\infty$, 
and then $w_n^2=(-t_n H_n z_n^2-cz_n+o(t_n^{-1}))/(1+o(1))$ as $n\to\infty$.

So in any case, 
\begin{gather*}
 w_n^2=(1+o(1))(-t_n H_n z_n^2-cz_n+o(t_n^{-1}))\quad\text{as }n\to\infty.
\end{gather*}
Hence
{recalling that}
$\beta_n{\searrow 0}$, we also have 
\begin{gather*}
\frac{\bigl||H_n/G_n|-|c|\bigr|(1-2{\beta_n})^2}{|G_nt_n|}
{<} |w_n|^2
{<} \frac{\bigl(|H_n /G_n|+|c|\bigr)(1+2{\beta_n})^2}{|G_nt_n|}
\end{gather*}
for every $n\in\bN$ large enough. Hence we must have
$(z_n,w_n)\in V_{t_n}(\beta_n;g_n,h_n)$. This {gives the desired} contradiction.

Once the former assertion is at our disposal, the latter assertion 
follows from, e.g.,
the fact that $\mu_{f_t} = \lim_{n\to \infty}2^{-2n}(f_t^n)^*\Omega$
weakly on $\bC^2$ 
for any smooth probability measure $\Omega$ compactly supported in $\C^2$
(see for instance \cite{ds_cime}).
\end{proof}

\section{Proof of Theorem \ref{teo_1}}\label{sec:proofconti}

{Let us see the former assertion (i).}
Pick $(g,h)=(g_0,h_0)$ {as in \eqref{eq_gh}}.
By Lemma \ref{lemma_stime}, 
we can fix $\beta\in(0,1/2)$ so small
that for every 
$(g,h)$ close enough to $(g_0,h_0)$
and every $t\in\bD^*$ close enough to $0$,
recalling that
$f_t=f_t(z,w;g,h)$,
we have $J_{f_t}\subset V_t {(=V_t(\beta;g,h))}$, so that
\begin{gather*}
  L(f_t) = \int_{V_t}\log|\det Df_t|\mu_{f_t}(z,w).
\end{gather*}
For every $(z,w)\in\bC^2$, 
we compute as
\begin{align*}
\det (Df_t)_{(z,w)}
 & =  {\det\begin{pmatrix}
      tg_z & 1+tg_w\\
      c+th_z & p'(w)+th_w
     \end{pmatrix}}
 =  {\det\begin{pmatrix}
		2tGz+t\tilde{g}_z & 1+tg_w\\
      c+th_z & (2w+c_1)+th_w
	       \end{pmatrix}}\\
 & = 4tGzw+2tGzc_1+{t\tilde{g}_z(p'(w)+th_w)}-(1+tg_w)(c+th_z)\\
 & = 4tGzw\cdot\biggl(1+\frac{2tGzc_1+{t\tilde{g}_z(p'(w)+th_w)}-(1+tg_w)(c+th_z)}{4tGzw}\biggr).
\end{align*}
From the definition of $V_t$, we have 
\begin{gather*}
\frac{4(1-\beta)\bigl||H/G|-|c|\bigr|^{1/2}(1-2\beta)}{|G|^{1/2}|t|^{1/2}}
\le|4tGzw|\le
\frac{4(1+\beta)\bigl(|H/G|+|c|\bigr)^{1/2}(1+2\beta)}{|G|^{1/2}|t|^{1/2}}\quad\text{on }V_t,
\end{gather*}
and moreover, $\sup_{V_t}
|\det (Df_t)_{(z,w)}-4tGzw|=O(1)$ as $t\to 0$,
uniformly on $(g,h)$
near $(g_0,h_0)$.
Hence increasing $\beta$ slightly if necessary, we have
\begin{gather}
\frac{4(1-\beta)\bigl||H/G|-|c|\bigr|^{1/2}(1-2\beta)}{|G|^{1/2}|t|^{1/2}}
\le|\det(Df_t)_{(z,w)}|
\le\frac{4(1+\beta)\bigl(|H/G|+|c|\bigr)^{1/2}(1+2\beta)}{|G|^{1/2}|t|^{1/2}}
\quad\text{on }V_t
\label{eq:hyperbolic}
\end{gather}
for every $t\in\bD^*$ close enough to $0$ and
every $(g,h)$ close enough to $(g_0,h_0)$. In particular,
\eqref{eq_prelim} holds, increasing $\beta$ slightly if necessary.

Now let us see the convergence assertion in the
latter assertion (ii).
Pick a {non-exceptional} $(g,h)$.
From the above computation of $\det(Df_t)_{(z,w)}$, we also have
\begin{gather*}
L(f_t)-\frac{1}{2}\log|t|^{-1}=\int_{V_t}\log|4tGzw|\mu_{f_t}(z,w)-\frac{1}{2}\log|t|^{-1}+o(1)\quad\text{as }t\to 0,
\end{gather*}
and we
 compute as 
\begin{align*}
 & \int_{V_t}\log|tzw|\mu_{f_t}(z,w)-\frac{1}{2}\log|t|^{-1}
=-\frac{3}{2}\log|t|^{-1}
+\int_{V_t}\log|z|\mu_{f_t}(z,w)+\int_{V_t}\log|w|\mu_{f_t}(z,w)\\
=&-\frac{3}{2}\log|t|^{-1}
+\int_{V_t}\log|z|\mu_{f_t}(z,w)+\frac{1}{2}\int_{V_t}\log|w|^2\mu_{f_t}(z,w)\\
= &- \frac{3}{2}\log|t|^{-1}+
\int_{V_t}\log|z|\mu_{f_t}(z,w)
+\frac{1}{2}\int_{V_t}\biggl(\log|cz+tHz^2|+\log\left|1+\frac{o(t^{-1})}{cz+tHz^2}\right|\biggr)\mu_{f_t}(z,w)\\
= &  \frac{3}{2}\Bigl(\int_{V_t}\log|z|\mu_{f_t}(z,w)-\log|t|^{-1}\Bigr)\\
&+\frac{1}{2}\int_{V_t}\log|c+tHz|\mu_{f_t}(z,w)
+\frac{1}{2}\int_{V_t}\log\left|1+\frac{o(t^{-1})}{cz+tHz^2}\right|\mu_{f_t}(z,w),
\end{align*}
where setting $(u,v):= f_t (z,w)\in J_{f_t}\subset V_t$, we also used the estimate
\begin{gather*}
 w^2=-th(z,w)-cz+v-c_1w-c_2=-tHz^2-cz {+o(t^{-1})\quad\text{as }t\to 0}.
\end{gather*}
Since $(1-\beta)\le|tGz|\le (1+\beta)$ on $V_t$, we have
\begin{gather*}
\int_{V_t}\log|z|\mu_{f_t}(z,w)-\log|t|^{-1}=-\log|G| {+o(1)\quad\text{as }t\to 0}
\end{gather*}
(as $\beta\to 0$). Similarly, 
{under the assumption that $(g,h)$ is non-exceptional,}
since
\begin{gather*}
0<\frac{\bigl||c|-|H/G|\bigr|}{|G|}
\le\liminf_{t\to 0}\biggl|\frac{cz+tHz^2}{t^{-1}}\biggr|
{\le\limsup_{t\to 0}\biggl|\frac{cz+tHz^2}{t^{-1}}\biggr|\le\frac{|c|+|H/G|}{|G|}}
\end{gather*}
(as $\beta\to 0$),
we have
\begin{gather*}
\int_{V_t}\log\left|1+\frac{o(t^{-1})}{cz+tHz^2}\right|\mu_{f_t}(z,w) {= o(1)\quad\text{as }t\to 0}.
\end{gather*}
It remains to show that
$\int_{V_t}\log|c+tHz|\mu_{f_t}(z,w) =\log\max\{|c|,|H/G|\} { +o(1)}$ as $t\to 0$.
Under the (linear) coordinates system 
change $(z,w)\mapsto(Z,W)=(Gtz,tw)$ on $\bC^2$, setting
\begin{align*}
\tilde{f}_t(Z,W)  :=&\begin{pmatrix}
	    Z\\
	    W
\end{pmatrix}\circ f_t\circ \begin{pmatrix}
				   Z\\
				   W
				  \end{pmatrix}^{-1}
=\begin{pmatrix}
 Z^2+GW+Gt^2\tilde{g}\Bigl(\frac{Z}{Gt},\frac{W}{t}\Bigr)\\
 {t}p\Bigl(\frac{W}{t}\Bigr)+c\frac{Z}{Gt}+th\Bigl(\frac{Z}{Gt},\frac{W}{t}\Bigr)
\end{pmatrix},\\
\tilde{\mu}_t  :=&\mu_{\tilde{f}_t}=\begin{pmatrix}
	    Z\\
	    W
	   \end{pmatrix}_*\mu_{f_t},\\
\tilde{U}_t  :=&\begin{pmatrix}
	    Z\\
	    W
	   \end{pmatrix}(U_t)
 =A(1-\beta,1+\beta)\times B\biggl(\frac{\bigl(|H/G|+|c|\bigr)^{1/2}(1+2\beta)}{|G|^{1/2}}|t|^{1/2}\biggr),\quad\text{and}\\
 \tilde{V}_t  :=&\begin{pmatrix}
	    Z\\
	    W
	   \end{pmatrix}(V_t)\\
  =&A(1-\beta,1+\beta)\times A\biggl(
 \frac{\bigl||H/G|-|c|\bigr|^{1/2}(1-2\beta)}{|G|^{1/2}}|t|^{1/2},
 \frac{\bigl(|H/G|+|c|\bigr)^{1/2}(1+2\beta)}{|G|^{1/2}}|t|^{1/2}\biggr)
\end{align*}
and letting $p_1:(Z,W)\mapsto Z$ be the projection to the first coordinate, we compute as
\begin{multline*}
\int_{V_t}\log|c+tHz|\mu_{f_t}(z,w)
=\int_{\tilde{V}_t}\begin{pmatrix}
	    Z\\
	    W
	   \end{pmatrix}_*(\log|c+tHz|\mu_{f_t}(z,w))\\
=\int_{\tilde{V}_t}\log\biggl|c+\frac{H}{G}Z\biggr|
\left(\begin{pmatrix}
	    Z\\
	    W
	   \end{pmatrix}_*\mu_{f_t}\right)(Z,W)
=\int_{\tilde{V}_t}\log\biggl|c+\frac{H}{G}Z\biggr|\tilde{\mu}_t(Z,W)\\
=\int_{A(1-\beta,1+\beta)}\log\biggl|c+\frac{H}{G}Z\biggr|((p_1)_*\tilde{\mu}_t)(Z).
\end{multline*}
Set $S^1_Z:=\{Z\in\bC_Z:|Z|=1\}$. We claim that 
$\lim_{t\to 0}(p_1)_*\tilde{\mu}_t=m_{S^1_Z}$ weakly on $\bC_Z$;
let $\nu$ be any weak limit point
of $\tilde{\mu}_t$ 
on $\bC^2$ as $t\to 0$, 
which is supported by $S^1_Z$ 
(as $\beta\to 0$).
Set $\tilde{D}_t:=\tilde{U}_t\cap(\bR_{>0}\times\bC)$.
For every $n\in\bN$, if $0<|t|\ll 1$, then
by \eqref{eq:hyperbolic}, 
we have $\det Df_t\neq 0$ on 
$V_t$ 
{(under the assumption that
$(g,h)$ is non-exceptional)}, so that 
$\tilde{f}_t^n:\tilde{f}_t^{-n}(\tilde{U}_t)\to\tilde{U}_t$ is an 
unbranched covering of degree $2^{2n}$,
$\tilde{f}_t^{-n}(\tilde{D}_t)$ consists of $2^{2n}$ {analytic} disks,
$\tilde{f}_t^{-n}(\tilde{U}_t\setminus\tilde{D}_t)$ consists of
$2^{2n}$ components, and for each component $U$ of 
$\tilde{f}_t^{-n}(\tilde{U}_t\setminus\tilde{D}_t)$, 
$\tilde{\mu}_t(U)=1/2^{2n}$ since $\tilde{\mu}_t(\tilde{U}_t\setminus\tilde{D}_t)=1$ and $\tilde{f}_t^*\tilde{\mu}_t=2^2\tilde{\mu}_t$ on $\bC^2$;
moreover, as $t\to 0$, $\tilde{f}_t^{-n}(\tilde{D}_t)$ tends to 
the set of all $2^n$-th roots of unity in $\bC_Z$ and
for each component $V$ of 
$S^1_Z\setminus\{2^n\text{-th roots of unity}\}$,
exactly $2^{2n}/2^n$
components
of $\tilde{f}_t^{-n}(\tilde{U}_t\setminus\tilde{D}_t)$ tend
to 
$V$
(as $\beta\to 0$). Hence for every $n\in\bN$ and
every component $V$ of $S^1_Z\setminus\{2^n\text{-th roots of unity}\}$,
we have $\nu(V)={(2^{2n}/2^n)}\cdot 1/2^{2n}=1/2^n$,
which implies that $(p_1)_*\nu=m_{S^1_Z}$ on $\bC_Z$
(by Caratheodory's 
theorem). Hence the claim holds.

Once this claim is at our disposal, recalling also $J_{f_t}\subset V_t$, we have
the desired convergence
\begin{gather*}
\lim_{t\to 0}\int_{A(1-\beta,1+\beta)}\log\biggl|c
+\frac{H}{G}Z\biggr|((p_1)_*\tilde{\mu}_t)(Z)
=\int_{\bC_Z}\log\left|c+\frac{H}{G}Z\right| m_{S^1_Z}(Z)
=\log\max\left\{|c|,\left|\frac{H}{G}\right|\right\}
\end{gather*}
since the function $Z\mapsto\log\bigl|c+(H/G)Z\bigr|$ is continuous
{near $S^1_Z$}
under the assumption $|H/G|\neq|c|$.

Finally, 
let us see the harmonicity assertion of $t\mapsto L(f_t)-(1/2)\log|t|^{-1}$ on $0<|t|\ll 1$ in the latter assertion (ii).
If $0<|t|\ll 1$, then 
by $f_t^{-1}(U_t)\Subset V_t$ and \eqref{eq:hyperbolic}
(and the assumption that $|H/G|\neq|c|$),
we have (not only $J_{f_t}\subset V_t$ but also)
$V_t\cap\bigcup_{n\in\bN\cup\{0\}}f_t^n(C_{f_t})=\emptyset$,
where $C_{f_t}:=\{p\in\bC^2:\det(Df_t)_p=0\}$ is the critical set of $f_t$.
In particular, by \cite[(F)$\Rightarrow$(B) in Theorem 1.1]{bbd},
 the function 
$t\mapsto L(f_t)$ is harmonic on $0<|t|\ll 1$. \qed

\section{Proof of Theorem \ref{teo_2}}\label{section_bif}

 Let us first note that by
 an argument similar
 to that in the final paragraph
in the proof of Theorem \ref{teo_1},
for every $(G_0,H_0)\in\bC^*\times\bC$ satisfying 
the assumption
 $|H_0/G_0|\neq|c|$, 
there exists $0<r_0\ll 1$ such that for 
every $(G,H)$ close enough to $(G_0,H_0)$ and every $t\in\bD_{r_0}^*$,
the parameter $(t,G,H)$ is not in the bifurcation locus 
in the parameter space $\bD^*\times\bC^*\times\bC$ of the family $(f_{t,G,H})$.
{In particular,
for every $G_0\in\bC^*$, every sequence $(t_n)$ in $\bD^*$
tending to $0$ as $n\to\infty$, and every {\itshape bounded}
sequence $(H_n)$ in $\bC$,
if for every $n\in\bN$, 
the function $H\mapsto L(f_{t_n,G_0,H})$ on $\bC$
is not harmonic on any open neighborhood
of $H=H_n$, then $\lim_{n\to\infty}|H_n/G_0|=|c|$.}

Let us next see that for every $G_0\in\bC^*$ {and every $R>2|cG_0|$},
if $0<|t|\ll 1$, then {the function $H\mapsto L(f_{t,G_0,H})$
is not harmonic on $\{|H|<R\}$;}
otherwise, there exist $R>2|cG_0|$ and
a sequence $(t_n)$ in $\bD^*$ 
tending to $0$ as $n\to\infty$ such that for every $n\in\bN$,
the function $H\mapsto L(f_{t_n,G_0,H})$ is harmonic on the
open disk $\{|H|<R\}$.
Then using the estimate \eqref{eq_prelim} {of $L(f_{t,g_0,0})$}
for $0<|t|\ll 1$,
the mean value theorem for the harmonic functions $H\mapsto L(f_{t_n,G_0,H})$
on $\{|H|<R\}$, and
the (lower) estimate \eqref{eq_prelim} of $L(f_{t,G_0,H})$ 
for $0<|t|\ll 1$, which holds uniformly on the circle $\{|H|=R\}$,
we must have
\begin{multline*}
\log\frac{4|c|^{1/2}}{|G_0|^{1/2}}
=\lim_{n\to\infty}\Bigl(L(f_{t_n,G_0,0})-\frac{1}{2}\log|t_n|^{-1}\Bigr)\\
=\lim_{n\to\infty}\Bigl(\int_0^{2\pi}L\Bigl(f_{t_n,G_0,Re^{i\theta}}\Bigr)\frac{d\theta}{2\pi}-\frac{1}{2}\log|t_n|^{-1}\Bigr)
\ge\log\frac{4\bigl|R/|G_0|-|c|\bigr|^{1/2}}{|G_0|^{1/2}}
>\log\frac{4|c|^{1/2}}{|G_0|^{1/2}}
\end{multline*}
(as $\beta\to 0$), which is impossible. 
Now the proof of Theorem \ref{teo_2} is complete. \qed

\medskip

We conclude this section
with a description of similarities
between Theorem \ref{teo_2} and one of the main results in \cite{ab_skew}, where
the accumulation of the bifurcation locus
to  the hyperplane at
 infinity for holomorphic families of quadratic polynomial skew products 
on $\bC^2$ has been completely described. 

\subsection*{A comparison with the bifurcation of polynomial skew products}\label{section_comparison}

Let us 
introduce in the family \eqref{eq_ftGH}
(after the coordinate change $(z,w)\mapsto(Gtz,w)$)
an extra parameter {$\eta\in\{0,1\}$} as
\begin{equation}\label{eq_comparison}
f_{t,G,H,\eta} (z,w)
:=
\begin{pmatrix}
	   z^2+\eta\cdot t G w \\
	   w^2 + \frac{H}{tG^2} z^2 + \frac{c}{tG}z
\end{pmatrix},
\end{equation}
so that
the family \eqref{eq_ftGH} (after the above coordinate change) 
corresponds
to the choice $\eta=1$ of the parameter $\eta$.
In the case $\eta=0$,
we get a family of {regular quadratic} \emph{polynomial skew products} 
on $\bC^2$ of the form studied in \cite{ab_skew}.
It is there proved by a method different from here and
based on the characterization of stability by means of the boundedness of the critical orbits that 
the bifurcation locus in the parameter space $\bC^3$
of the family $(z,w)\mapsto (z^2, w^2 + Az^2 + Bz + C)$
of quadratic polynomial skew products on $\bC^2$ accumulates to the subset
\begin{gather*}
\{[A,B,C]\in\P^2_\infty: Az^2 + Bz + C =0 \text{ for some } z\in S^1
\}
\end{gather*}
in the hyperplane at infinity $\P^2_\infty$ of $\bC^3$, see
\cite[Theorem C]{ab_skew}. 
{Setting}
$\eta=1$,
$A= \frac{H}{tG^2}$, $B=\frac{c}{tG}$, and $C=0$, 
the condition that
$Az^2+Bz+Cz=0$ for some $z\in\bC$
{reduces to}
\[
c + \frac{H}{G}z=0\quad\text{for some } z \in S^1.
\]
This is equivalent to the {\itshape bifurcation 
condition}  $|H/G|=|c|$ that we found in Theorem \ref{teo_2}. 

\subsection*{The topologies of the (second) Julia sets}
In \cite[Theorem D]{ab_skew},
hyperbolic components in the parameter space $\bC^3$
of the family $(z,w)\mapsto (z^2, w^2 + Az^2 + Bz + C)$
of quadratic polynomial skew products on $\bC^2$
near the hyperplane at
infinity $\bP^2_{\infty}$ are also classified;
the proof was based on a monodromy argument, and
distinguished hyperbolic components
in terms of the topologies of the (second) Julia sets.

In our situation $\eta=1$ in \eqref{eq_comparison}, by a similar monodromy argument, 
it is also possible to observe that {for every $G_0\in\bC^*$,}
the {topologies of} the (second) Julia set $J_{f_{t,G,H,1}}$ of $f_{t,G,H,1}$ 
at parameters $|H|\ll 1$ and $|H|\gg {1}$ 
{and $t\in\bD^*$ close enough to $0$} are incompatible,
{so that}
the two kinds of parameters $(G_0,H,t_0)$ for $|H|\ll 1$ and $H \gg {1}$
and $0<|t_0|\ll 1$ {cannot} belong to the same \emph{hyperbolic} component
in the parameter space $\bC$ of the family $(f_{t_0,G_0, H})_{H\in\bC}$
(notice that this is not enough to provide a proof of Theorem \ref{teo_2}).

\begin{proposition}
Let $\mathcal {C}$ be a Cantor set.
Then for every $(G_0,{H_0})\in\bC^*\times\bC$, the following hold$;$
\begin{enumerate}
\item when $|H_0|\ll 1$, for every $t\in\bD^*$ close enough to $0$
{and every $(G,H)$ close enough to $(G_0,H_0)$},  
$J_{f_{t,G,H}}$
is a suspension of $\mathcal {C}$ set over $S^1$.
\item when $|H_0|\gg {1}$, for every $t\in\bD^*$ close enough to $0$
{and every $(G,H)$ close enough to $(G_0,H_0)$},
$J_{f_{t,G,H}}$ is homeomorphic to 
$S^1 \times \mathcal{C}$.
\end{enumerate}
\end{proposition}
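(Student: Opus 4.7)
The plan is to identify the topology of $J_{f_{t,G,H}}$ via a monodromy computation on the preimage covers of the rescaled dynamics $\tilde f_t$ introduced in Section \ref{sec:proofconti}, along the lines of \cite[Theorem D]{ab_skew}. After the linear change of coordinates $(Z,W)=(Gtz,tw)$, Lemma \ref{lemma_stime} gives $J_{f_t}\subset\tilde V_t$ and $\tilde f_t^{-1}(\tilde U_t)\Subset\tilde V_t$. Both hypotheses $|H_0|\ll 1$ and $|H_0|\gg 1$ imply the non-exceptional condition $|H/G|\neq|c|$ near $(G_0,H_0)$, so \eqref{eq:hyperbolic} gives $\det D\tilde f_t\neq 0$ on $\tilde V_t$; consequently $\tilde f_t^{-n}(\tilde U_t)\to\tilde U_t$ is an unbranched degree-$4^n$ covering for each $n\ge 1$.

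The key step is to compute the monodromy of the first such cover around the generator of $\pi_1(\tilde U_t)\simeq\mathbb{Z}$ (coming from the annulus factor $A(1-\beta,1+\beta)$). To leading order in $t$, the first coordinate of $\tilde f_t$ is $Z^2$, giving two $Z$-preimages above each point of $\tilde U_t$; the relation $W^2\simeq-(t/G^2)Z(cG+HZ)$ valid on $J_{f_t}$ (derived in the proof of Lemma \ref{lemma_stime}) then determines the two $W$-preimages as a square root of $Z(cG+HZ)$. A Rouch\'e count shows $Z(cG+HZ)$ has exactly one zero in $\bD$ when $|H|<|cG|$ (only $Z=0$) and two zeros when $|H|>|cG|$ (also $Z=-cG/H$). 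Hence the winding of $Z\mapsto Z(cG+HZ)$ around $0$ on $S^1$ is $1$ in Case (i) $(|H_0|\ll 1)$ and $2$ in Case (ii) $(|H_0|\gg 1)$; correspondingly, the two $W$-branches are interchanged by the base loop in Case (i) and globally distinct in Case (ii).

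This monodromy propagates to all iterates: in Case (ii), $\tilde f_t^{-n}(\tilde U_t)\cap\{Z\in S^1\}$ consists of $2^n$ disjoint circles covering $S^1_Z$, while in Case (i) it is a single circle covering $S^1_Z$ with multiplicity $2^n$. By the lower bound in \eqref{eq:hyperbolic}, $\tilde f_t$ is expanding on $\tilde V_t$, so components of $\tilde f_t^{-n}(\tilde U_t)$ shrink uniformly in diameter. Combined with the characterization of $\mu_{\tilde f_t}$ as a weak limit of pullbacks (as used at the end of the proof of Lemma \ref{lemma_stime}), $J_{f_t}$ is recovered as the inverse limit of these preimage towers: a product $S^1\times\mathcal{C}$ in Case (ii), and the classical dyadic solenoid (equivalently, a suspension of the $2$-adic odometer on $\mathcal{C}$ over $S^1$) in Case (i).

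The main obstacle is to transfer this abstract inverse-limit description into a genuine homeomorphism with $J_{f_{t,G,H}}\subset\tilde V_t$. For this one must sharpen the expansion estimate from \eqref{eq:hyperbolic} to guarantee that the $W$-direction diameters of components of $\tilde f_t^{-n}(\tilde U_t)$ shrink to $0$ uniformly as $n\to\infty$, so that distinct infinite backward orbits correspond to distinct points of $J_{f_t}$, and conversely every point of $J_{f_t}$ is determined by a unique backward orbit. The local uniformity in $(G,H)$ of the estimates in Lemma \ref{lemma_stime} and \eqref{eq:hyperbolic} then yields the conclusion for all $(G,H)$ close enough to $(G_0,H_0)$.
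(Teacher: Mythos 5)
Your proposal takes essentially the same approach as the paper's: the paper states that it adapts the monodromy argument of \cite[Section 7]{ab_skew} and omits the details, and your argument is exactly such a monodromy computation, with the key distinguishing tool being the Rouch\'e count of zeros of $Z(cG+HZ)$ in $\bD$ (one zero when $|H/G|<|c|$, two when $|H/G|>|c|$, hence odd/even winding on $S^1_Z$). One point needs care, though: the step ``in Case (i) [$\tilde f_t^{-n}(\tilde U_t)\cap\{Z\in S^1\}$] is a single circle covering $S^1_Z$ with multiplicity $2^n$'' is not correct as stated. Since traversing the base loop of $\tilde U_t$ once only lifts to a half-traversal of the $Z$-circle, the level-one monodromy of the degree-$4$ cover $\tilde f_t^{-1}(\tilde U_t)\to\tilde U_t$ is a $4$-cycle (Case (i)) versus a product of two transpositions (Case (ii)), and for the higher iterates one must track, for each $k$, the branch flip of $W_0(Z^{(k)})\simeq\sqrt{-(t/G^2)Z^{(k)}(cG+HZ^{(k)})}$ with $Z^{(k)}\approx Z^{2^k}$ as $Z$ traverses $S^1$: the flip is governed by $(-1)^{2^kN}$, not by a uniform doubling of the winding. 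So to conclude that $J_{f_{t,G,H}}$ is a nontrivial suspension rather than $S^1\times\mathcal C$ in Case~(i), one cannot simply ``propagate'' the level-one picture; one must carry out the level-by-level monodromy computation on the cross-sectional Cantor fiber exactly as in \cite[Section~7]{ab_skew}. This is the content the paper delegates to that reference, and it is the gap in your sketch.
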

The proof is done by an argument
{similar to that}
 in \cite[Section 7]{ab_skew} for the case of polynomial skew products, 
and we would thus omit it.

\section{Proof of Theorem \ref{th:individual}}
\label{sec:individual}

Pick a non-exceptional $(g,h)$ 
and let $f_t=f_t(z,w;g,h)$.
Set $\cC_\eta := \{(x,y)\in\C^2:|y| > \eta |x|>0\}$ for each $\eta>0$. 
Recall that $\|\cdot\|$ denotes the Euclidean norm on $\bC^2$ 
and let $p_2(z,w)=w$ be the projection to the second coordinate.

\begin{claim}\label{lemma_computation_l}
{For every $\delta\in(0,1)$},
there exists $\eta_0>0$ so large that
for every $t\in\bD^*$ close enough to $0$ and every $(z,w) \in J_{f_t}$,
the subset $\cC_{\eta_0}$
is invariant under $(Df_{t})_{(z,w)}$,
that is, $(Df_{t})_{(z,w)}(C_{\eta_0})\subset C_{\eta_0}$, and
for every $(x,y)\in \cC_{\eta_0}$,
\begin{gather}\label{eq:image}
(1-\delta)\cdot |2w|
\le\frac{\bigl\|(Df_{t})_{(z,w)}(x,y)\bigr\|}{\|(x,y)\|}
\le (1+\delta)\cdot |2w|,
\end{gather} 
so in particular that, for every $n\in\bN$, 
\begin{gather}
\bigl(2(1-\delta)\bigr)^n \prod_{j=0}^{n-1}   
\bigl|p_2(f_t^j(z,w))\bigr|
\leq \frac{\bigl\|D(f_{t}^{n})_{(z,w)}(x,y)\bigr\|}{\|(x,y)\|}  
\leq \bigl(2(1+\delta)\bigr)^n \prod_{j=0}^{n-1}
\bigl|p_2(f_t^j(z,w))\bigr|.
\tag{\ref{eq:image}$'$}\label{eq:imageiterate}
\end{gather}
\end{claim}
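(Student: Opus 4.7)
The plan is to compute $(Df_t)_{(z,w)}$ explicitly and exploit the fact that, on $J_{f_t}\subset V_t$ (from Lemma \ref{lemma_stime}), the matrix is dominated by its $(2,2)$-entry $p'(w)+th_w=2w+c_1+th_w$, whose modulus blows up like $|t|^{-1/2}$ as $t\to 0$ thanks to the non-exceptional hypothesis. Differentiating \eqref{eq_ft},
$$(Df_t)_{(z,w)}=\begin{pmatrix} tg_z & 1+tg_w \\ c+th_z & 2w+c_1+th_w\end{pmatrix}.$$
Expanding $g_z=2Gz+\tilde g_z$ and $h_z=2Hz+h_{zw}w+h_z(0,0)$, the definition of $V_t$ yields uniform bounds $|tg_z|\le 2(1+\beta)+O(|t|)$, $|1+tg_w|\le 1+O(|t|)$, and $|c+th_z|\le|c|+2|H/G|(1+\beta)+O(|t|)$, together with the crucial lower estimate $|2w+c_1+th_w|=|2w|(1+o(1))\to\infty$, thanks to the lower bound $|w|\gtrsim|t|^{-1/2}$ that holds on $V_t$ precisely because $|H/G|\ne|c|$. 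Fix $M>0$ dominating the first-row entries and $|c+th_z|$ uniformly in small $|t|$.

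For $(x,y)\in\cC_{\eta_0}$, set $(x',y'):=(Df_t)_{(z,w)}(x,y)$. Using $|x|<|y|/\eta_0$,
\begin{gather*}
|x'|\le M|x|+(1+O(|t|))|y|\le\bigl(M/\eta_0+1+O(|t|)\bigr)|y|,\\
(|2w|-M)|y|-M|x|\le|y'|\le(|2w|+M)|y|+M|x|.
\end{gather*}
For $\eta_0$ large and $|t|$ small one has $|2w|\gg\eta_0$, hence $|y'|\ge(|2w|-M-M/\eta_0)|y|\gg\eta_0|x'|$, which yields $(x',y')\in\cC_{\eta_0}$; and since $|y|\le\|(x,y)\|\le|y|\sqrt{1+\eta_0^{-2}}$ (and analogously for $(x',y')$ by the invariance just established), the ratio $\|(x',y')\|/\|(x,y)\|$ is pinched between $|2w|(1-\delta)$ and $|2w|(1+\delta)$ for any prescribed $\delta\in(0,1)$ after taking $\eta_0$ large enough and $|t|$ small enough, which is \eqref{eq:image}.

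For \eqref{eq:imageiterate}, invoke the chain rule $D(f_t^n)_{(z,w)}=(Df_t)_{f_t^{n-1}(z,w)}\cdots(Df_t)_{(z,w)}$: by the $f_t$-invariance of $J_{f_t}$ each $f_t^j(z,w)$ lies in $J_{f_t}$, so \eqref{eq:image} applies at each step, while the invariance of $\cC_{\eta_0}$ keeps the intermediate vectors in $\cC_{\eta_0}$; multiplying the per-step ratios produces the telescoping product with factors $2(1\pm\delta)|p_2(f_t^j(z,w))|$. The main obstacle is the uniform absorption of the $O(1)$ additive corrections to $|2w|$ into the multiplicative factor $(1\pm\delta)$ simultaneously for all $(z,w)\in J_{f_t}$; this is precisely what the uniform lower bound $|2w|\gtrsim|t|^{-1/2}$ on $V_t$ from Lemma \ref{lemma_stime} (under the non-exceptional assumption) provides.
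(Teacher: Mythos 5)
Your proposal follows essentially the same route as the paper: compute $(Df_t)_{(z,w)}$ explicitly, use the localization $J_{f_t}\subset V_t$ from Lemma \ref{lemma_stime} to see that the $(2,2)$-entry $2w+c_1+th_w$ is $\sim|2w|\gtrsim|t|^{-1/2}$ (here the non-exceptional hypothesis is precisely what ensures the inner radius in $w$ is nonzero) while the remaining three entries stay bounded, and deduce first the invariance of the cone and then the pinching of the operator norm, with \eqref{eq:imageiterate} following by the chain rule. The only substantive point to flag is an incorrect intermediate bound: you claim $|1+tg_w|\le 1+O(|t|)$, but $g_w=g_{zw}z+g_{ww}w+g_w(0,0)$ and on $V_t$ one has $|tGz|\asymp 1$, so $|tg_{zw}z|=|g_{zw}/G|\,|tGz|$ does not vanish as $t\to 0$; the correct estimate is $|1+tg_w|\le 1+|g_{zw}/G|(1+\beta)+o(1)$, an $O(1)$ quantity (the paper's proof carries this $(1+|g_{zw}/G|)$ factor explicitly). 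This slip does not damage the argument, since all you ultimately use is that the first-row entries are dominated by a constant $M$ independent of small $|t|$; with $M$ taken $\ge 1+|g_{zw}/G|(1+\beta)+o(1)$ your chain $|x'|\le (M/\eta_0+M)|y|$ and $|y'|\ge(|2w|-2M)|y|$ still gives cone invariance and the pinching $\|(x',y')\|/\|(x,y)\|\in[(1-\delta)|2w|,(1+\delta)|2w|]$ once $\eta_0$ is large and $|t|$ small. (Similarly, the corrections you write as $O(|t|)$ in $|tg_z|$ and $|c+th_z|$ are actually $O(|t|^{1/2})$ coming from $|tw|$, but again this is immaterial.) So the proof is correct modulo that bookkeeping fix, and is the same argument as the paper's, merely phrased in $(x,y)$ rather than the paper's normalization $(1,\alpha)$.
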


\begin{proof}
For every $\alpha\in\bC$, every $t\in\bD^*$, and every $(z,w)\in\bC^2$,
we compute as 
\begin{gather*}
 (Df_t)_{(z,w)}\begin{pmatrix} 
	       1 \\
	      \alpha
	      \end{pmatrix}
=
\begin{pmatrix}
2tGz + t{\tilde{g}_z} & 1+tg_w \\
c + th_z & (2w + c_1)+th_w
\end{pmatrix}
\begin{pmatrix}
1\\ \alpha
\end{pmatrix}
=
\begin{pmatrix}
2tGz + t{\tilde{g}_z} + (1+tg_w)\alpha\\
c+th_z + (2w+c_1+th_w) \alpha
\end{pmatrix}.
\end{gather*}
By Lemma \ref{lemma_stime}, for every $\beta\in(0,1)$ small enough,
there exists $\eta_0>0$ so large that 
for every $t\in\bD^*$ close enough to $0$, every $(z,w)\in J_{f_t}$, 
and every $(1,\alpha)\in\cC_{\eta_0}$,  
we have ($J_{f_t}\subset V_t$ and moreover) the
estimates
\begin{align*}
|2tGz+ t{\tilde{g}_z} + (1+tg_w)\alpha|
  \leq&(1+4^{-1}\beta)\cdot(1+|g_{zw}/G|)|\alpha|\quad\text{and}\\
(1-4^{-1}\beta)\cdot 2|w\alpha|\leq |c+th_z + (2w+c_1+th_w)\alpha|
\leq & (1+ 4^{-1}\beta)\cdot 2|w\alpha|.
\end{align*}
{So,} in particular 
\begin{gather*}
 (1-\beta)\cdot \frac{2|w|}{1+|g_{zw}/G|}
 \le
\frac{|c+th_z + (2w+c_1+th_w)\alpha|}{|2tGz+ t{\tilde{g}_z} + (1+tg_w)\alpha|}\quad\text{and}\\
(1-4^{-1}\beta)|2w\alpha| 
  \le \bigl\|(Df_{t})_{(z,w)}(1,\alpha)\bigr\|
\le(1+4^{-1}\beta)\Bigl(\frac{1+|g_{zw}/G|}{|2w|}+1\Bigr)|2w\alpha|.
\end{gather*}
Noting also
that
$\inf_{(z,w)\in V_t}|w|=O(t^{-1/2})$ as $t\to 0$ 
(under the assumption that $(g,h)$ is non-exceptional), 
the invariance $(Df_t)_{(z,w)}(C_{\eta_0})\subset C_{\eta_0}$ holds
and, for every $\delta\in(0,1)$, 
we obtain the desired estimate \eqref{eq:image},
decreasing $\beta\in(0,1)$ and increasing $\eta_0>0$ if necessary.
\end{proof}

Let us first see the final convergence assertion on $\chi_1(f_t),\chi_2(f_t)$
as $t\to 0$. Recall two convergence results from ergodic theory.
For $0<|t|\ll 1$,
by the Oseledec multiplicative ergodic theorem,
for $\mu_{f_t}$-almost every $p\in\bC^2$, the limit
$\Lambda_p:=\lim_{n\to \infty}\bigl((Df_{t}^{n})_p^*(Df_{t}^{n})_p\bigr)^{1/(2n)}$ 
exists in $M(2,\bC^2)$
with respect to the operator norm topology
and
has the two individual Lyapunov exponents $\chi_1(f_t)\ge\chi_2(f_t)$ of $f_t$
as the two {eigenvalues}. Moreover,
{there is a canonical filtration
$\bC^2\supset E_{2,p}\supset\{0\}$ of 
$\bC^2$
by the invariant subspace $E_{2,p}$ under $\Lambda_p$ 
such that}
\[
\lim_{n\to \infty}\frac{1}{n} \log \|D(f_{t}^{n})_p(v)\|
=\begin{cases}
  \chi_1 (f_{t}) & \text{for every }v\in \bC^2\setminus E_{2,p},\\
  \chi_2 (f_{t}) & \text{for every }v\in E_{2,p}\setminus\{0\}.
 \end{cases}
\]
On the other hand, for every $0<|t|\ll 1$, by the Birkhoff 
ergodic theorem, for $\mu_{f_t}$-almost every $(z,w)\in\bC^2$, we also have
\begin{gather*}
 \lim_{n\to \infty}\frac{1}{n}\sum_{j=0}^{n-1}\log
\bigl|p_2(f_t^j(z,w))\bigr|  
 =\int_{\bC^2}\log|w|\mu_{f_t}(z,w).
\end{gather*}
Once 
the Claim and the above two convergence
results
from ergodic theory are at our disposal,
for every $\delta\in(0,1)$, there is $\eta_0>0$ so large that
for every $0<|t|\ll 1$,
we have 
\[
\left|\chi_1 (f_t)-\log 2-\int_{V_t} \log |w|\mu_{f_t}(z,w)
\right|
\le \log(1-\delta)^{-1}.
\]
On the other hand, in the proof of Theorem \ref{teo_1}(ii), we have already seen that
\begin{align*}
&\int_{V_t}\log|w|\mu_{f_t}(z,w)-\frac{1}{2}\log|t|^{-1}\\
& =
\frac{1}{2}\biggl(\int_{V_t}\log|z|\mu_{f_t}(z,w)-\log|t|^{-1}\biggr)
+\frac{1}{2}\int_{V_t}\biggl(\log|c+tHz|+\log\left|1+\frac{o(t^{-1})}{cz+tHz^2}\right|\biggr)\mu_{f_t}(z,w)\\
& =-\frac{1}{2}\log|G|+\frac{1}{2}\log\max\biggl\{|c|,\biggl|\frac{H}{G}\biggr|\biggr\}+o(1)\quad\text{as }t\to 0.
\end{align*}
Hence we have
the convergence
\begin{gather*}
\lim_{t\to 0}\Bigl(\chi_1 (f_t) - \frac{1}{2} \log|t|^{-1}\Bigr)
=\log\frac{2\max\{|c|,|H/G|\}^{1/2}}{|G|^{1/2}}
\end{gather*}
(as $\delta\to 0$),
and in turn 
have
the convergence $\chi_2(f_t)=L(f_t)-\chi_1(f_t)\to
\log 2$
as $t\to 0$,
also by Theorem \ref{teo_1}(ii).

Now let us see the harmonicity assertion on $t\mapsto\chi_1(f_t),\chi_2(f_t)$. 
Recall that by Berteloot--Dupont--Molino \cite[Theorem 1.5]{BDM08}, 
for every $0<|t|\ll 1$,
\begin{gather}
\begin{cases}
 \lim_{n\to\infty}2^{-2n}\sum_{p\in R(f_t^n)\cap J_{f_t}}
 \frac{1}{n}\log\|D(f_t^n)_p\|=\chi_1(f_t)\quad\text{and}\\
 \lim_{n\to\infty}2^{-2n}\sum_{p\in R(f_t^n)\cap J_{f_t}}
 \frac{1}{n}\log|\det(D(f_t^n)_p)|=\chi_1(f_t)+\chi_2(f_t),
\end{cases}\label{eq:BDM}
\end{gather}
where we denote by $R(f_t^n)$ the set of all repelling fixed points of $f_t^n$
in $\bC^2$ and by $\|D(f_t^n)_p\|$ the operator norm of the differential 
$D(f_t^n)_p$ for each $p\in R(f_t^n)\cap J_{f_t}$. 

{We also claim that there is $r_0\in(0,1)$ so small that
for every $t\in\bD_{r_0}^*$, every $n\in\bN$, and every $p\in R(f_t^n)\cap J_{f_t}$, 
the absolute values of the two eigenvalues of the differential $D(f_t^n)_p$ 
are different and $>1$, so in particular 
letting $\lambda_{1,p,n}(t),\lambda_{2,p,n}(t)$ be the two eigenvalues of
$D(f_t^n)_p$ such that $|\lambda_{1,p,n}(t)|>|\lambda_{2,p,n}(t)|$,
the above approximations in \eqref{eq:BDM} yield
\begin{align*}
\begin{cases}
\lim_{n\to\infty}2^{-2n}\sum_{p\in R(f_t^n)\cap J_{f_t}}\frac{1}{n}\log|\lambda_{1,p,n}(t)|=\chi_1(f_t)\quad\text{and}\\
\lim_{n\to\infty}2^{-2n}\sum_{p\in R(f_t^n)\cap J_{f_t}}\frac{1}{n}\log|\lambda_{2,p,n}(t)|=\chi_2(f_t);
\end{cases}
\tag{\ref{eq:BDM}$'$}\label{eq:BDMeigen}
\end{align*}
}indeed, by the above Claim, fixing $\delta\in(0,1)$ and then
$\eta_0>0$ large enough, for every $0<|t|\ll 1$ 
and every $p=(z,w)\in R(f_t^n)\cap J_{f_t}$, 
the M\"obius transformation $A$ induced by $D(f_t^n)_p\in\mathrm{GL}(2,\bC)$
on the projectivization $\P^1$ of 
$\bC^2\setminus\{(0,0)\}$
maps the open spherical disk $\mathcal{D}_{\eta_0}$ in $\bP^1$ 
corresponding to the cone $\cC_{\eta_0}\cup\{(x,y):x=0\}$ 
minus $\{(0,0)\}$ 
to a relatively compact subset in $\mathcal{D}_{\eta_0}$.
This implies the existence of a fixed point of 
$A$ in $\mathcal{D}_{\eta_0}$, and in turn that of 
an eigenvector $v_1$ of $D(f_t^n)_p$ in $\cC_{\eta_0}$. 
Setting $(z_j,w_j):=f_t^{j-1}(p)$ for each $j\in\{1,\ldots,n\}$,
the eigenvalue $\lambda_1$ of $D(f_t^n)_p$ associated to $v_1$ 
satisfies 
$2(1+\delta)|w_1\cdots w_n|^{1/n}\ge|\lambda_1|^{1/n}
\geq 2(1-\delta)|w_1\cdots w_n|^{1/n}\to\infty$
as $t\to 0$, 
and the other eigenvalue $\lambda_2$ of $D(f_t^n)_p$ then satisfies
\begin{multline*}
\frac{2+o(1)}{1+\delta}=\biggl(\prod_{j=1}^n\frac{|4tGz_jw_j+O(1)|}{2(1+\delta)\cdot|w_j|}\biggr)^{1/n}
\le\\
\le|\lambda_2|^{1/n}=\biggl(\frac{|\det D(f_t^n)_p|}{|\lambda_1|}\biggr)^{1/n}
\le\biggl(\prod_{j=1}^n\frac{|4tGz_jw_j+O(1)|}{2(1-\delta)\cdot|w_j|}\biggr)^{1/n}
=\frac{2+o(1)}{1-\delta}\quad\text{as }t\to 0,
\end{multline*}
and both the divergence of $|\lambda_1|^{1/n}$ and 
the bounds of $|\lambda_2|^{1/n}$ as $t\to 0$ are uniform on $n,p$.
Hence the claim holds. 

Recall now that
 the function $t\mapsto L(f_t)$ is harmonic on $0<|t|\ll 1$
(seen in Theorem \ref{teo_1}(ii)),
so that by \cite[(B)$\Rightarrow$(A) in Theorem 1.1]{bbd},
for any simply connected subdomain $D$ in $0<|t|\ll 1$, 
the set function $t\mapsto\bigcup_{n\in\bN}R(f_t^n)\cap J_{f_t}$ is regarded as
a holomorphic motion parametrized by $D$.
{Consequently, applying (a variant of) Harnack's theorem 
(see e.g.\ \cite[Theorem 1.3.10]{Ransford95}) 
to the {\itshape convergent} sequences
of {\itshape positive harmonic} functions on $D$ in \eqref{eq:BDMeigen},
both the functions 
$t\mapsto\chi_1(f_t)$ and $t\mapsto\chi_2(f_t)$ are harmonic on $D$.}
\qed

\begin{acknowledgement}
This 
{project}
grew up during the second author's stay
at Imperial College London 
and the first author's stay
at RIMS and Mathematics Department, Kyoto University 
in Autumn 2017, and the authors thank those institutes for their hospitality.
The first author's visit was partially supported by 
JSPS Grant-in-Aid for Scientific Research (B) 22340025.
The second author was partially supported by JSPS Grant-in-Aid 
for Scientific Research (C), 15K04924. 
\end{acknowledgement}

\bibliographystyle{alpha}
\bibliography{biblio_bo}

\end{document}